\newcommand{\newsection}[1]{\setcounter{equation}{0} \section{#1}}
\newcommand{\vp}{\varphi}
\newcommand{\clb}{\mathcal{B}}
\newcommand{\clh}{\mathcal{H}}
\newcommand{\cli}{\mathcal{I}}
\newcommand{\clj}{\mathcal{J}}
\newcommand{\clq}{\mathcal{Q}}
\newcommand{\cls}{\mathcal{S}}
\newcommand{\clz}{\mathcal{Z}}
\newcommand{\D}{\mathbb{D}}
\newcommand{\T}{\mathbb{T}}
\newcommand{\Z}{\mathbb{Z}}
\newcommand{\raro}{\rightarrow}
\newtheorem{theorem}{Theorem}[section]
\newtheorem{lemma}[theorem]{Lemma}
\newtheorem{proposition}[theorem]{Proposition}
\newtheorem{corollary}[theorem]{Corollary}
\theoremstyle{definition}
\newtheorem{definition}[theorem]{Definition}
\numberwithin{equation}{section}
\begin{document}

\title{Products of two orthogonal projections}

\author[Bhattacharjee]{Jaydeep Bhattacharjee}
\address{Indian Statistical Institute, Statistics and Mathematics Unit, 8th Mile, Mysore Road, Bangalore, 560059,
India}
\email{rs\_math2102@isibang.ac.in}

\author[Sarkar]{Jaydeb Sarkar}
\address{Indian Statistical Institute, Statistics and Mathematics Unit, 8th Mile, Mysore Road, Bangalore, 560059,
India}
\email{jay@isibang.ac.in, jaydeb@gmail.com}


\subjclass{47A68, 15A23, 30J05, 46E25, 32A35}
\keywords{Orthogonal projections, inner functions, inner projections, Toeplitz operators, model spaces}
	
\begin{abstract}
We study operators that are products of two orthogonal projections. Our results complement some of the classical results of Crimmins and von Neumann. Particular emphasis has been given to projections associated with inner functions defined on the polydisc.
\end{abstract}
	
\maketitle

\tableofcontents

\newsection{Introduction }\label{sec: intro}

Elements from an algebra that are finite products of special elements from the same algebra are of general significance. One of the more specific cases would be the ring of all square matrices or the ring of all bounded linear operators acting on a Hilbert space $\clh$, which we denote by $\clb(\clh)$. The present investigation focuses on operators that are products of pairs of projections on Hilbert spaces. Here, all Hilbert spaces are assumed to be separable and defined over $\mathbb{C}$, and all projections are orthogonal projections. Therefore, $P \in \clb(\clh)$ is a \textit{projection} if
\[
P = P^* = P^2.
\]
Our aim is to study operators $T \in \clb(\clh)$ such that
\[
T = P_1 P_2,
\]
for some projections $P_1$ and $P_2$ in $\clb(\clh)$. Operators that admit the above factorizations have been examined in multiple contexts. Aronszajn, Browder, Dixmier , Kakutani, and Wiener are among the many more eminent names who have contributed to illuminate this subject (see \cite{NS, Wu} for a thorough historical description). Two additional notable contributions relevant to our work are von Neumann's formula for iterated products of projections and Crimmins' analysis of products of two projections. Let us first revisit two characterizations from the list provided by Crimmins \cite{CM, RW, Wu}: An operator $T \in \clb(\clh)$ is the product of two projections if and only if
\[
T^2 = T T^* T,
\]
if and only if
\[
T = P_{\overline{\text{ran}}T} P_{\overline{\text{ran}}T^*}.
\]
Given a closed subspace $\cls$ of a Hilbert space $\clh$, we denote the orthogonal projection of $\clh$ onto $\cls$ by $P_\cls$. The preceding factorization of $T$ is referred to as the \textit{canonical factorization} and will be crucial in the subsequent discussion. We present somewhat independent proofs of the above equivalence properties. Additionally, we offer the following new characterization: $T \in \clb(\clh)$ is a product of two projections if and only if
\[
T T^* = T P_{\overline{\text{ran}}T^*}.
\]

Before turning to von Neumann's perspective, we pause for Nagy-Foias and Langer's viewpoint on contractive operators acting on Hilbert spaces. This is relevant because, if $T \in \clb(\clh)$ is a product of two projections, then $T$ is necessarily a contraction (that is, $\|T h\| \leq \|h\|$ for all $h \in \clh$). Consequently, the Sz.-Nagy and Foias theory of contractions \cite{NF Book} applies to operators that are products of two projections. This paper presents some new perspectives on the structure of contractions that are products of two projections. Given a contraction $T \in \clb(\clh)$, the celebrated \textit{canonical decompositions} of $T$ is the orthogonal decomposition of closed subspaces
\[
\clh = \clh_u \oplus \clh_{cnu},
\]
where both $\clh_u$ and $\clh_{cnu}$ reduce $T$, and $T|_{\clh_u}$ is unitary and $T|_{\clh_{cnu}}$ is completely non-unitary (c.n.u., for short); that is, $T|_{\clh_{cnu}}$ on $\clh_{cnu}$ has no nontrivial unitary summand. Moreover, the unitary summand $\clh_u$ is given by
\[
\clh_u = \{h \in \clh: \|T^m h\| = \|T^{*m} h\| = \|h\|, m \in \mathbb{N}\}.
\]
This decomposition is due to Nagy-Foias and Langer \cite{Langler, N-F 60}. The spaces $\clh_u$ and $\clh_{cnu}$ are designated as the \textit{unitary part} and the \textit{cnu} part of $T$, respectively.

Now we assume that $T = P_1P_2$ for some projections $P_1$ and $P_2$ in $\clb(\clh)$. Theorems \ref{thm: cnu ker T-I} and \ref{thm: cnu} give a concrete description of the canonical decomposition of such operators: The unitary and cnu parts of $T$ are given by
\[
\clh_u = \ker (I - P_1P_2) = \text{ran} P_1 \cap \text{ran} P_2,
\]
and
\[
\clh_{cnu} = \ker T \bigvee \ker T^*,
\]
respectively, where $\bigvee$ denotes the span closure of subspaces. The above results also complement the classical von Neumann's alternating orthogonal projection formula. In fact, if $P_1$ and $P_2$ are projections, then the von Neumann's alternating orthogonal projection formula tells us that
\[
\text{SOT-} \lim_{m\raro \infty} T^m = P_{\text{ran} P_1 \cap \text{ran} P_2}.
\]
From this perspective and in view of our results outlined above, we further assert that (see Theorem \ref{thm: vn 1} for more details)
\[
\text{SOT-} \lim_{m\raro \infty} T^m = P_{\clh_u},
\]
where $\clh_u$ is the unitary part of the canonical decomposition of the contraction $T = P_1 P_2$. In the context of canonocial decomposition, we additionally include the following asymptotic properties of the cnu part of $T$ (see Theorem \ref{thm: vN and proj}):
\[
\text{SOT}-\lim_{m \raro \infty} (T|_{\clh_{cnu}})^{*m} = \text{SOT}-\lim_{m \raro \infty} (T|_{\clh_{cnu}})^{m} = 0.
\]
Following standard notations \cite{NF Book}, we write the above asymptotic properties simply as: 
\[
T^*|_{\clh_{cnu}}, T|_{\clh_{cnu}} \in C_{\cdot 0}.
\]

Now we look into a more concrete infinite dimensional Hilbert space, $H^2(\D^n)$, the Hardy space of square summable analytic functions defined on the polydisc $\D^n$. The commutative Banach algebra of all bounded analytic functions on $\D^n$ is denoted by $H^\infty(\D^n)$:
\[
H^\infty(\D^n) = \{\vp \in \text{Hol}(\D^n): \|\vp\|_\infty := \sup_{z \in \D^n} |\vp(z)| < \infty\}.
\]
For each $\vp \in H^\infty(\D^n)$, the analytic Toeplitz operator $T_\vp$ on $H^2(\D^n)$ is defined by
\[
T_\vp f = \vp f,
\]
for all $f \in H^2(\D^n)$. One knows that $T_\vp \in \clb(H^2(\D^n))$ for all $\vp \in H^\infty(\D^n)$. The function $\vp \in H^\infty(\D^n)$ is said to be \textit{inner} if $|\vp| = 1$ a.e. on $\T^n$ (the distinguished boundary of $\D^n$) in the sense of radial limits. It is known that $\vp$ is inner if and only if $T_\vp$ is an isometry on $H^2(\D^n)$. In this case, we have
\[
T_\vp T_\vp^* = P_{\vp H^2(\D^n)}.
\]
A projection $P \in \clb(H^2(\D^n))$ is said to be an \textit{inner projection} \cite{Debnath} if there exists an inner function $\vp \in H^\infty(\D^n)$ such that $P = T_\vp T_\vp^*$. Equivalently, we have
\begin{equation}\label{eqn: inn prof}
P = P_{\vp H^2(\D^n)}.
\end{equation}
Here our goal is to classify a class of operators that are products of two inner projections. Of course, our aim is to obtain an analytic answer to this problem. To achieve this, for each nonzero operator $T \in \clb(H^2(\D^n))$, we associate an inner function that acts as the least common multiple within a suitable class of inner functions. Our approach is as follows: Given a bounded linear operator $T$ on $H^2(\D^n)$, define the set
\[
\cli_T =\{\vp \in H^\infty(\D^n): \vp \text{ is inner, and }{\text{ran} T} \subseteq \vp H^2(\D^n)\}.
\]
Whenever $\cli_T$ admits a least common multiple (lcm), it is unique, and we denote it by $\vp_T \in H^\infty(\D^n)$. That is,
\[
\vp_T := \text{lcm} \cli_T.
\]
Under this notation, we similarly denote by
\[
\vp_{T^*} = \text{lcm} \cli_{T^*},
\]
whenever the lcm of $\cli_{T^*}$ exists. We also define
\[
\clj_T = \{\vp \in H^\infty(\D^n): \vp \text{ is inner and } \vp H^2(\D^n) \subseteq \ker T^*\}.
\]
Whenever it exists, we denote by $\psi_T$ the greatest common divisor (gcd) of $\clj_T$; that is,
\[
\psi_T:= \text{gcd} \clj_T.
\]

\begin{definition}
A nonzero operator $T \in \clb(H^2(\D^n))$ satisfies the lcm property (respectively, the gcd property) if both $\cli_T$ and $\cli_{T^*}$ (respectively, $\clj_T$ and $\clj_{T^*}$) contain a lcm (respectively, a gcd).
\end{definition}

In the one-variable setting, it follows from Beurling's theorem that every nonzero operator satisfies both the lcm and gcd properties (see Sections \ref{sec:inner proj} and \ref{sec: model proj}). In the spirit of canonical factorizations, Theorem \ref{thm: prod of inner} yields an analytic characterization of products of two inner projections: Let $T \in \clb(H^2(\D^n))$ be a nonzero operator satisfying the lcm property. Then $T$ is a product of two inner projections if and only if
\[
T = P_{\vp_T H^2(\D^n)} P_{\vp_{T^*} H^2(\D^n)}.
\]
Moreover, we observe in Corollary \ref{cor: not prod of inner} that if $T \in \clb(H^2(\D^n))$ is a cnu contraction, then $T$ cannot be expressed as a product of two inner projections. 

We also obtain similar results in the context of model spaces. A \textit{model space} $\clq_\vp$ is defined as the closed subspace obtained by quotienting $H^2(\D^n)$ by an inner function $\vp \in H^\infty(\D^n)$ in the sense that
\[
\clq_\vp = H^2(\D^n) \ominus \vp H^2(\D^n).
\]
We refer to $P_{\clq_\vp}$ the projection onto the model space $\clq_\vp$ as a \textit{model projection}. We prove the following (see Theorem \ref{thm: model proj}): Let $T \in \clb(H^2(\D^n))$ be a nonzero operator satisfying the gcd property. Then $T$ is a product of two model projections if and only if
\[
T = P_{\clq_{\psi_T}} P_{\clq_{\psi_{T^*}}}.
\]

We note that, unlike $\cli_T$, the set $\clj_T$ has the potential to be empty. Therefore, part of the above result also guarantees that the set $\clj_T$ remains nonempty. The final section of this paper presents some examples illustrating some of our results.

The results of this paper can be divided into three parts.

\begin{enumerate}
\item Characterizations of operators that can be represented as products of two projections. Most of the results presented here are known; however, the proofs provided offer new insights that are useful for developing other results in the paper. This material is covered in Section \ref{sec: Crimmins}.
\item Applications of these characterizations to the classical Nagy–Foiaș and Langer orthogonal decompositions of contractions, as well as to von Neumann’s alternating orthogonal projection formula. These applications are discussed in Sections \ref{sec: can decom} and \ref{sec: von Neum}.
\item The study of more specialized classes of products of orthogonal projections, particularly those related to the Hardy space and to inner functions on the polydisc. These topics are developed in Sections \ref{sec:inner proj}, \ref{sec: model proj}, and \ref{sec: example}.
\end{enumerate}

\section{Crimmins' perspective}\label{sec: Crimmins}

In this section, we prove some results on operators that can be represented as products of two projections. We take Crimmins' point of view and dedicate a part of the section to reproving his results. However, we address this within a marginally wider framework, anticipating more realistic uses in the structure of operators that can be expressed as products of operators.

In the context of Crimmins' perspective, we remark that the equivalence of (1), (2), and (4) in Theorem \ref{cor: mixed classif of PP} of this section is attributed to Crimmins and is referred to in \cite{CM, RW, Wu}. However, we were unable to locate the original paper of Crimmins. It appears that Crimmins' result was first mentioned, without an explicit reference, by Radjavi and Williams in their 1969 paper \cite{RW}.

Let us start with a lemma that states that operators with projection factors on the left side are obligated to have a canonical choice for the left projection factors.

\begin{lemma}\label{lemma: PXP}
Let $T \in \clb(\clh)$. If $T = P X$ for some projection $P \in \clb(\clh)$ and linear operator $X \in \clb(\clh)$, then
\[
T = P_{\overline{\text{ran} T}} X.
\]
\end{lemma}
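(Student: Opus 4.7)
The plan is to argue through the subspace containment $\overline{\text{ran}\,T}\subseteq \text{ran}\,P$ and to exploit it via the projection order. Set $Q=P_{\overline{\text{ran}\,T}}$; the goal is to show $T=QX$.

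First I would observe that, since $T=PX$, every vector $Th=PXh$ lies in $\text{ran}\,P$. Because $P$ is an orthogonal projection, $\text{ran}\,P$ is closed, so we also get $\overline{\text{ran}\,T}\subseteq \text{ran}\,P$. This containment is the geometric content of the lemma and the starting point of the argument.

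Next I would convert the subspace containment into an identity relating the two projections. The inclusion $\text{ran}\,Q\subseteq\text{ran}\,P$ forces $PQh=Qh$ for every $h\in\clh$, i.e.\ $PQ=Q$. Taking adjoints, $QP=Q$ as well. This is the standard fact that $Q\le P$ in the projection order, and it is really the only nontrivial step; once it is in hand, the rest is an immediate manipulation.

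Finally I would combine these two facts. On one hand, $Th\in\text{ran}\,T\subseteq\overline{\text{ran}\,T}$, so $QTh=Th$, giving $T=QT$. On the other hand, $T=PX$, so
\[
T=QT=Q(PX)=(QP)X=QX=P_{\overline{\text{ran}\,T}}X,
\]
as desired. I do not anticipate any real obstacle; the only place a careless reader might slip is forgetting that $\text{ran}\,P$ is automatically closed (which is what lets one pass from $\text{ran}\,T\subseteq\text{ran}\,P$ to $\overline{\text{ran}\,T}\subseteq\text{ran}\,P$), and this should be noted explicitly.
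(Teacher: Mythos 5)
Your proof is correct and follows essentially the same route as the paper's: establish $\overline{\text{ran}\,T}\subseteq\text{ran}\,P$, deduce $P_{\overline{\text{ran}\,T}}P=P_{\overline{\text{ran}\,T}}$, and combine with $T=P_{\overline{\text{ran}\,T}}T$. The only cosmetic difference is that the paper cites Douglas' lemma for the range inclusion, whereas you derive it directly from the closedness of $\text{ran}\,P$, which is if anything more elementary.
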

\begin{proof}
Since $T = PX$, by Douglas' lemma, it follows that $\overline{\text{ran} T} \subseteq \text{ran} P$. Then
\[
P P_{\overline{\text{ran} T}} = P_{\overline{\text{ran} T}} = P_{\overline{\text{ran} T}} P.
\]
Now, $T = P_{\overline{\text{ran} T}} T$, and hence $T = P_{\overline{\text{ran} T}} P X$. This implies $T = P_{\overline{\text{ran} T}} X$, which completes the proof of the lemma.
\end{proof}

As a result, operators admitting the left and right side projection factors can be represented as follows:

\begin{proposition}\label{prop: Crimmins 1}
Let $T \in \clb(\clh)$. If $T = P_1 X P_2$ for some projections $P_1, P_2 \in \clb(\clh)$ and linear operator $X \in \clb(\clh)$, then
\[
T = P_{\overline{\text{ran} T}} X  P_{\overline{\text{ran} T^*}}.
\]
\end{proposition}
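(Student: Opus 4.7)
The plan is to apply the preceding Lemma \ref{lemma: PXP} twice, once directly to $T$ and once to $T^*$, exploiting the symmetry of the hypothesis under the adjoint.

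First I would handle the left factor. Since $T = P_1 X P_2 = P_1(XP_2)$ is of the form $PX$ with $P = P_1$, Lemma \ref{lemma: PXP} applies with $X$ replaced by $XP_2$ and yields
\[
T = P_{\overline{\text{ran}\, T}}\, X P_2.
\]
This takes care of the left-hand projection; notice that the specific $P_1$ has disappeared and is replaced by the canonical choice $P_{\overline{\text{ran}\, T}}$.

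Next I would take adjoints to convert the right-hand side into a left-hand side and run the same argument. From $T = P_{\overline{\text{ran}\, T}} X P_2$ we get
\[
T^* = P_2\, X^* P_{\overline{\text{ran}\, T}},
\]
which is again of the form $PY$ with $P = P_2$ and $Y = X^* P_{\overline{\text{ran}\, T}}$. Applying Lemma \ref{lemma: PXP} to $T^*$ gives
\[
T^* = P_{\overline{\text{ran}\, T^*}}\, X^* P_{\overline{\text{ran}\, T}}.
\]
Taking adjoints one more time produces the desired identity
\[
T = P_{\overline{\text{ran}\, T}}\, X\, P_{\overline{\text{ran}\, T^*}}.
\]

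There is really no obstacle here: the proposition is essentially just the two-sided version of the lemma, and the only minor point to be attentive to is that after the first application the inner operator is no longer $X$ but $X P_2$, which is harmless because the second application strips off that $P_2$ and replaces it with the canonical right factor $P_{\overline{\text{ran}\, T^*}}$. The proof is therefore a clean two-line reduction to Lemma \ref{lemma: PXP}.
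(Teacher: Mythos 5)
Your proof is correct and follows exactly the paper's argument: one application of Lemma \ref{lemma: PXP} to replace $P_1$ by $P_{\overline{\text{ran}\, T}}$, then a passage to adjoints and a second application of the same lemma to handle the right-hand factor. No gaps; the extra care you take in identifying the inner operator as $XP_2$ (and later $X^*P_{\overline{\text{ran}\, T}}$) is exactly the point the paper leaves implicit.
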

\begin{proof}
By Lemma \ref{lemma: PXP}, we know that $T = P_{\overline{\text{ran} T}} X P_2$, and hence
\[
T^* = P_2 X^* P_{\overline{\text{ran} T}}.
\]
The proof can now be obtained by applying Lemma \ref{lemma: PXP} one more time to $T^*$ and the above factorization of $T^*$.
\end{proof}

In particular, if $X = I$, then we have the following classification of operators as products of two projections:

\begin{corollary}\label{cor: Crimmins 1}
Let $T \in \clb(\clh)$. Then $T$ is a product of two projection if and only if
\[
T = P_{\overline{\text{ran} T}} P_{\overline{\text{ran} T^*}}.
\]
\end{corollary}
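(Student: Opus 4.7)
The plan is to obtain this corollary as an almost immediate consequence of Proposition \ref{prop: Crimmins 1}, so the argument will be very short; the work has already been done at the level of the proposition.

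For the forward direction, I will assume $T = P_1 P_2$ for some projections $P_1, P_2 \in \clb(\clh)$. This fits exactly into the hypothesis of Proposition \ref{prop: Crimmins 1} with the choice $X = I$, since $T = P_1 \cdot I \cdot P_2$. Applying that proposition then yields
\[
T = P_{\overline{\text{ran} T}} \, I \, P_{\overline{\text{ran} T^*}} = P_{\overline{\text{ran} T}} P_{\overline{\text{ran} T^*}},
\]
which is the desired canonical factorization.

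The backward direction is trivial: if $T = P_{\overline{\text{ran} T}} P_{\overline{\text{ran} T^*}}$, then $T$ is manifestly a product of two (orthogonal) projections, namely $P_1 = P_{\overline{\text{ran} T}}$ and $P_2 = P_{\overline{\text{ran} T^*}}$.

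There is no genuine obstacle here; all the work was front-loaded into Lemma \ref{lemma: PXP} and Proposition \ref{prop: Crimmins 1}. The only small point worth noting is that once we write $T = P_1 P_2$, the statement says that the \emph{specific} projections appearing on the two sides can be chosen canonically as the range projections of $T$ and $T^*$, respectively; this canonical choice is precisely what Proposition \ref{prop: Crimmins 1} delivers in a single step.
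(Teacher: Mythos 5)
Your proposal is correct and follows exactly the route the paper intends: the corollary is stated immediately after Proposition \ref{prop: Crimmins 1} as the special case $X = I$, with the converse being trivial since $P_{\overline{\text{ran} T}}$ and $P_{\overline{\text{ran} T^*}}$ are themselves projections. Nothing is missing.
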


Crimmins is credited with this result \cite[page 1595]{CM}. Here, we draw the conclusion from a rather broad perspective.

The following result is from \cite[Corollary 2]{Sebest}. The present proof, compared to \cite{Sebest}, is more elementary (the sufficient part) and is also presented for completeness. The result is clearly in line with Douglas' range inclusion theorem.

\begin{proposition}\label{prop: Sybest}
Let $T_1, T_2 \in \clb(\clh)$. Then there exists a projection $P \in \clb(\clh)$ such that
\[
T_1 = T_2 P,
\]
if and only if
\[
T_1 T_1^* = T_2  T_1^*.
\]
\end{proposition}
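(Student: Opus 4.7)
The plan is to treat the two implications separately, and to let Lemma~\ref{lemma: PXP} guide us toward the canonical choice of projection.

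For the necessity, suppose $T_1 = T_2 P$ with $P = P^* = P^2$. Taking adjoints gives $T_1^* = P T_2^*$, and so by direct computation
\[
T_1 T_1^* = (T_2 P)(P T_2^*) = T_2 P^2 T_2^* = T_2 P T_2^* = T_2 T_1^*.
\]
This is essentially a two-line calculation and needs nothing beyond $P^2 = P$.

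For the sufficiency, the main task is to produce a candidate projection, and this is where Lemma~\ref{lemma: PXP} is the compass. If any $P$ worked, applying Lemma~\ref{lemma: PXP} to $T_1^* = P T_2^*$ would force $T_1^* = P_{\overline{\text{ran}} T_1^*} T_2^*$, i.e.\ $T_1 = T_2 P_{\overline{\text{ran}} T_1^*}$. So the only sensible candidate is $P := P_{\overline{\text{ran}} T_1^*}$, and I would verify this one directly.

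To check the candidate works, I would rewrite the hypothesis $T_1 T_1^* = T_2 T_1^*$ as $(T_2 - T_1) T_1^* = 0$, which says that $T_2 - T_1$ vanishes on $\text{ran}\, T_1^*$ and therefore, by continuity, on $\overline{\text{ran}}\, T_1^*$. Hence $T_2 P = T_1 P$. Combining this with the standard identity $T_1 = T_1 P_{\overline{\text{ran}} T_1^*}$ (which holds because $\ker T_1 = (\overline{\text{ran}}\, T_1^*)^\perp$), I get $T_1 = T_1 P = T_2 P$, which is the desired factorization.

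The proof is short, so there is no serious obstacle; the only nontrivial step is spotting the right projection, and Lemma~\ref{lemma: PXP} dictates that choice. A minor point to keep in mind is to invoke continuity correctly when passing from $\text{ran}\, T_1^*$ to its closure, since $T_2 - T_1$ is bounded and so extends the equality automatically.
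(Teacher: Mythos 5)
Your proof is correct and follows essentially the same route as the paper: the forward direction is the same two-line computation using $P^2 = P$, and for the converse you select the same canonical projection $P = P_{\overline{\text{ran}}\, T_1^*}$, observe that $T_1$ and $T_2$ agree on $\overline{\text{ran}}\, T_1^*$, and combine this with $T_1 = T_1 P_{\overline{\text{ran}}\, T_1^*}$. No substantive differences.
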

\begin{proof}
If $T_1 = T_2 P$ for some projection $P \in \clb(\clh)$, then
\[
T_1 T_1^* = T_2 P P T_2^* = T_2 P T_2^* = T_2 T_1^*.
\]
Conversely, assume that $T_1 T_1^* = T_2  T_1^*$, then $T_1f = T_2f$ for all $f \in \overline{\text{ran} T_1^*}$. Pick $f \in \clh$ and write
\[
f = P_{\ker T_1} f \oplus P_{\overline{\text{ran} T_1^*}} f.
\]
Then $T_1f = T_1 P_{\overline{\text{ran} T_1^*}} f$. Since $P_{\overline{\text{ran} T_1^*}} f \in \overline{\text{ran} T_1^*}$, we have
\[
T_1 f = T_2 P_{\overline{\text{ran} T_1^*}} f,
\]
which completes the proof by choosing the projection $P = P_{\overline{\text{ran} T_1^*}}$.
\end{proof}

In addition to its more elementary nature, the proof above also adds to our understanding, revealing that the projection $P$ is given by
\[
P = P_{\overline{\text{ran} T_1^*}},
\]
whenever $T_1= T_2 P$. The above proposition, though simple, is important for our purposes, as will become evident in a later section of the paper when we characterize operators that are products of two inner or model projections.

The subsequent corollary is yet another characterization of operators that are products of two projections.

\begin{corollary}\label{cor: prod of proj}
Let $T \in \clb(\clh)$. Then $T$ is a product of two projections if and only if
\[
T T^* = P_{\overline{\text{ran} T}} T^*.
\]
\end{corollary}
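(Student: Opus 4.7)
The strategy is to chain two equivalences already established in this section: first, that $T$ is a product of two projections if and only if it admits a factorization of the form $T = P_{\overline{\text{ran} T}} P$ with $P$ a projection; and second, Sebesty\'en's criterion (Proposition \ref{prop: Sybest}) which characterizes such one-sided factorizations by a quadratic operator identity.

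For the first equivalence, the backward direction is immediate since $P_{\overline{\text{ran} T}}$ itself is a projection. For the forward direction, assume $T = P_1 P_2$ for projections $P_1, P_2 \in \clb(\clh)$; then Lemma \ref{lemma: PXP} (applied with $X = P_2$) forces $T = P_{\overline{\text{ran} T}} P_2$, supplying the desired projection $P = P_2$.

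For the second equivalence, I would apply Proposition \ref{prop: Sybest} with $T_1 = T$ and $T_2 = P_{\overline{\text{ran} T}}$. The proposition then states that there is a projection $P$ satisfying $T = P_{\overline{\text{ran} T}} P$ if and only if
\[
T T^* = P_{\overline{\text{ran} T}} T^*,
\]
which is exactly the identity in the statement of the corollary.

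Concatenating the two equivalences yields the claim. There is no real obstacle here: the work has been front-loaded into Lemma \ref{lemma: PXP} and Proposition \ref{prop: Sybest}, and the corollary is essentially a repackaging. The only small care required is to verify that applying Sebesty\'en's proposition with $T_2 = P_{\overline{\text{ran} T}}$ produces precisely the asserted identity rather than its adjoint, which is immediate since $P_{\overline{\text{ran} T}}$ is self-adjoint.
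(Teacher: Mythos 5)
Your proof is correct and follows essentially the same route as the paper's: both reduce the statement to Sebesty\'{e}n's criterion (Proposition \ref{prop: Sybest}) applied with $T_2 = P_{\overline{\text{ran} T}}$, with Lemma \ref{lemma: PXP} (equivalently Corollary \ref{cor: Crimmins 1}) supplying the canonical left projection factor. The only cosmetic difference is that the paper verifies the necessity direction by a direct computation from the canonical factorization $T = P_{\overline{\text{ran}}T} P_{\overline{\text{ran}}T^*}$ rather than by invoking the ``only if'' half of the proposition.
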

\begin{proof}
If $T$ is a product of two projections, then Corollary \ref{cor: Crimmins 1} implies $T = P_{\overline{\text{ran} T}} P_{\overline{\text{ran} T^*}}$. We now compute
\[
T T^* = P_{\overline{\text{ran} T}} P_{\overline{\text{ran} T^*}} P_{\overline{\text{ran} T}} = P_{\overline{\text{ran} T}} (P_{\overline{\text{ran} T^*}} P_{\overline{\text{ran} T}}) = P_{\overline{\text{ran} T}} T^*.
\]
For the converse direction, assume that $T T^* = P_{\overline{\text{ran} T^*}} T^*$. The conclusion now follows directly from the sufficient part of Proposition \ref{prop: Sybest}.
\end{proof}

Next, we assemble all the results obtained so far, along with a new one.

\begin{theorem}\label{cor: mixed classif of PP}
Let $T \in \clb(\clh)$. The following are equivalent:
\begin{enumerate}
\item $T$ is a product of two projections.
\item $T = P_{\overline{\text{ran}}T} P_{\overline{\text{ran}}T^*}$.
\item $T T^* = T P_{\overline{\text{ran} T}}$.
\item $T T^* T = T^2$.
\end{enumerate}
\end{theorem}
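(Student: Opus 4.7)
My plan is to reduce everything to the two preceding corollaries and then supply one short algebraic step to close the loop. Three of the four equivalences are essentially already in hand: $(1)\Leftrightarrow(2)$ is Corollary \ref{cor: Crimmins 1}, and $(1)\Leftrightarrow(3)$ follows by taking adjoints of the identity in Corollary \ref{cor: prod of proj}. Indeed, $TT^*$ is self-adjoint, so
\[
TT^* = P_{\overline{\text{ran}\,T}} T^* \iff TT^* = T P_{\overline{\text{ran}\,T}},
\]
and the right-hand side is precisely condition $(3)$.

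The genuinely new content is therefore the equivalence of $(4)$ with the other conditions. The forward direction $(1)\Rightarrow(4)$ is immediate: if $T = P_1 P_2$, then
\[
TT^*T = P_1 P_2 \cdot P_2 P_1 \cdot P_1 P_2 = (P_1 P_2)^2 = T^2,
\]
using only $P_i^2 = P_i = P_i^*$.

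The substantive step is $(4)\Rightarrow(3)$. Rewriting $(4)$ as $(TT^* - T)T = 0$ shows that $\text{ran}\,T \subseteq \ker(TT^* - T)$, and since the kernel of a bounded operator is closed, this upgrades to $\overline{\text{ran}\,T} \subseteq \ker(TT^* - T)$. Equivalently, $(TT^* - T)P_{\overline{\text{ran}\,T}} = 0$. Next, taking adjoints of the obvious identity $P_{\overline{\text{ran}\,T}} T = T$ gives $T^* P_{\overline{\text{ran}\,T}} = T^*$, and hence $TT^* P_{\overline{\text{ran}\,T}} = TT^*$. Substituting this into the previous equation $TT^* P_{\overline{\text{ran}\,T}} = T P_{\overline{\text{ran}\,T}}$ yields
\[
TT^* = T P_{\overline{\text{ran}\,T}},
\]
which is condition $(3)$. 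Since $(3)\Rightarrow(1)$ has already been established via Corollary \ref{cor: prod of proj}, this closes the loop.

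The main obstacle is conceptual rather than computational: one has to view the algebraic identity $TT^*T = T^2$ as a range-inclusion statement by rewriting it as $(TT^* - T)T = 0$. Once that observation is made, the passage to projections via closure and the appeal to Corollary \ref{cor: prod of proj} are mechanical.
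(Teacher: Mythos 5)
Your proposal is correct and follows essentially the same route as the paper: the equivalences $(1)\Leftrightarrow(2)$ and $(1)\Leftrightarrow(3)$ are delegated to Corollaries \ref{cor: Crimmins 1} and \ref{cor: prod of proj} (your explicit adjoint step reconciling $TT^*=P_{\overline{\text{ran}\,T}}T^*$ with $TT^*=TP_{\overline{\text{ran}\,T}}$ is a welcome clarification the paper leaves implicit), and $(1)\Rightarrow(4)$ is the same direct computation. Your $(4)\Rightarrow(3)$ argument, reading $(TT^*-T)T=0$ as $\overline{\text{ran}\,T}\subseteq\ker(TT^*-T)$ and then using $T^*P_{\overline{\text{ran}\,T}}=T^*$, is a slightly cleaner rearrangement of the paper's identical underlying manipulation and is fully valid.
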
 
\begin{proof}
The equivalences (1) $\Leftrightarrow$ (2) and (1) $\Leftrightarrow$ (3) are simply Corollary \ref{cor: Crimmins 1} and Corollary \ref{cor: prod of proj}, respectively. Next, assume (1), that is, $T$ is a product of two projections, say, $T = PQ$. Then
\[
T T^*T = PQ (QP)PQ = PQ PQ = T^2.
\]
This proves (4). To prove (4) $\Rightarrow$ (3), we assume that $T T^* T = T^2$, equivalently, $T^* T T^* = T^{*2}$. Then
\[
0 = T^*(T T^* - T^*) = T^*(P_{\overline{\text{ran} T}} + (I - P_{\overline{\text{ran} T}}))(T T^* - T^*) = T^*P_{\overline{\text{ran} T}}(T T^* - T^*),
\]
implies
\[
(T T^* - T P_{\overline{\text{ran} T}}) T = 0.
\]
This says $T T^* = T P_{\overline{\text{ran} T}}$ on $\overline{\text{ran} T}$. Since the identity holds trivially on $(\overline{\text{ran} T})^\perp = \ker T^*$, we conclude that $T T^* = T P_{\overline{\text{ran} T}}$. This completes the proof of the theorem.
\end{proof}

The equivalence of (1) and (4) above is due to Crimmins \cite[page 1595]{CM}. The present proof is once again different. In this context, we also refer the reader to \cite[Theorem 3.1]{CM}. The same paper contains results (cf. \cite[Corollary 3.8]{CM}) related to parametrizations and the uniqueness of pairs of projections whose products yield a given operator.

 

\section{Nagy-Foias and Langer’s perspective}\label{sec: can decom}

Note that if a bounded linear operator acting on a Hilbert space is a product of two projections, then it is necessarily a contraction. Consequently, the theory of contractions, widely recognized as being extremely rich, applies in this situation. This is a vast domain, and a thorough application of the theory of contractions to this particular class of operators and vice versa may not be entirely transparent. This requires a more in-depth investigation, which ought to be carried out. However, in this section, we will address a fundamental characteristic of the product of projections via the structure of contractions. To do that, we revisit the classic Nagy-Foias and Langer orthogonal decompositions of contractions \cite{Langler, N-F 60}.

Let $T \in \clb(\clh)$ be a contraction. Then there is a unique orthogonal decomposition $\clh = \clh_u \oplus \clh_{cnu}$ such that both $\clh_u$ and $\clh_{cnu}$ reduce $T$, and $T|_{\clh_u}$ is unitary whereas $T|_{\clh_{cnu}}$ is completely non-unitary (cnu in short). This amounts to saying that $T|_{\clh_{cnu}}$ does not have unitary summand (see Section \ref{sec: intro}). As a result, we get the diagonal decomposition:
\[
T = \begin{bmatrix}
T|_{\clh_u} & 0
\\
0 & T|_{\clh_{cnu}}
\end{bmatrix},
\]
on $\clh = \clh_u \oplus \clh_{cnu}$. Moreover, we have the representation of the unitary part $\clh_u$ as \cite[Chapt. 1, Theorem 3.2]{NF Book}
\[
\clh_u = \{h \in \clh: \|T^m h\| = \|T^{*m} h\| = \|h\|, m \in \mathbb{N}\}.
\]
This decomposition is commonly known as the \textit{Nagy-Foia\c{s} and Langer} or the \textit{canonical decomposition} of contractions and stands as one of the most basic structures of contractions. In the following, we connect this with our theory of the product of two projections. For simplicity of notation, given projections $P_1$ and $P_2$, define the closed subspace $\cls_{P_1 P_2}$ as
\begin{equation}\label{eqn: S P}
\cls_{P_1 P_2} = \text{ran} P_1 \cap \text{ran} P_2.
\end{equation}

\begin{theorem}\label{thm: cnu ker T-I}
Let $P_1$ and $P_2$ be projections defined on some Hilbert space $\clh$. Then
\[
\clh_u = \ker (P_1 P_2 - I) = \cls_{P_1 P_2},
\]
where $\clh_u$ is the unitary part of the contraction $P_1 P_2$.
\end{theorem}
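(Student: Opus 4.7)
The plan is to verify both equalities by unpacking the norm-equality characterization of the unitary part together with the elementary fact that for any projection $P$ and vector $v$, $\|Pv\| = \|v\|$ holds if and only if $Pv = v$ (equivalently, $v \in \text{ran} P$), which comes straight from the Pythagorean decomposition $\|v\|^2 = \|Pv\|^2 + \|(I-P)v\|^2$. Throughout, I write $T = P_1 P_2$.

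First I would dispose of the easy set-theoretic equality $\ker(P_1 P_2 - I) = \text{ran} P_1 \cap \text{ran} P_2$. The inclusion $\supseteq$ is immediate: if $P_1 h = h$ and $P_2 h = h$, then $P_1 P_2 h = P_1 h = h$. For $\subseteq$, suppose $P_1 P_2 h = h$. Then $\|h\| = \|P_1 P_2 h\| \leq \|P_2 h\| \leq \|h\|$, so all three quantities agree. The equality $\|P_2 h\| = \|h\|$ forces $P_2 h = h$, and then $h = P_1 P_2 h = P_1 h$ gives $h \in \text{ran} P_1$ as well. So $h \in \cls_{P_1 P_2}$.

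Next I would show $\cls_{P_1 P_2} \subseteq \clh_u$. If $h \in \text{ran} P_1 \cap \text{ran} P_2$, then $Th = P_1 P_2 h = h$ and similarly $T^* h = P_2 P_1 h = h$. Consequently $T^m h = T^{*m} h = h$ for every $m \in \N$, so in particular $\|T^m h\| = \|T^{*m} h\| = \|h\|$, placing $h$ in $\clh_u$ by the Nagy--Foia\c{s}--Langer description recalled above.

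The remaining inclusion $\clh_u \subseteq \ker(T - I)$ is where the argument has its small punchline: only the case $m = 1$ of the norm conditions is actually needed. If $h \in \clh_u$, then $\|Th\| = \|h\|$, i.e., $\|P_1 P_2 h\| = \|h\|$. Running the same chain $\|P_1 P_2 h\| \leq \|P_2 h\| \leq \|h\|$ as before and using equality twice, I first get $P_2 h = h$, and then $\|P_1 h\| = \|P_1 P_2 h\| = \|h\|$ gives $P_1 h = h$. Hence $Th = P_1 P_2 h = h$, so $h \in \ker(T-I)$. The main (mild) obstacle is simply recognizing that the two-sided infinite family of norm conditions collapses to a single one in the presence of the projection factorization; once that is noticed, the proof is a direct application of the Pythagorean identity for projections.
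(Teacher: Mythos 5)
Your proposal is correct and follows essentially the same route as the paper: both arguments hinge on the chain $\|P_1P_2h\| \leq \|P_2h\| \leq \|h\|$ together with the Pythagorean fact that $\|Ph\|=\|h\|$ forces $Ph=h$, yielding the identification $\{h:\|P_1P_2h\|=\|h\|\}=\text{ran}\,P_1\cap\text{ran}\,P_2$, and both use only the $m=1$ norm condition to place $\clh_u$ inside $\ker(P_1P_2-I)$. The only cosmetic difference is the order in which the cycle of inclusions is closed.
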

\begin{proof}
First, we claim that
\begin{equation}\label{eqn: range of P}
\cls_{P_1 P_2} = \{f \in \clh: \|P_1 P_2f\| = \|f\|\}.
\end{equation}
Let $f \in \clh$, and assume that $\|P_1 P_2f\| = \|f\|$. Then
\[
\|f\| = \|P_1 P_2 f\| \leq \|P_2f\| \leq \|f\|.
\]
Subsequently, each of the preceding inequities attains equality. Therefore
\[
\|P_2f\| = \|P_1 P_2 f\| = \|f\|.
\]
Since $P_2$ is a projection, $\|P_2f\| = \|f\|$ implies that $f \in \text{ran} P_2$, or equivalently
\[
P_2 f= f.
\]
Again, $\|P_2f\| = \|P_1 P_2 f\|$ implies that $P_2 f \in \text{ran} P_1$. Therefore, $P_2 f = P_1 P_2 f$ and hence
\[
P_1 f= f.
\]
This proves that $f \in \text{ran} P_1 \cap \text{ran} P_2$. On the other hand, if $f \in \text{ran} P_1 \cap \text{ran} P_2$, then
\[
P_1 P_2f = f,
\]
and evidently $\|P_1 P_2f\| = \|f\|$. This completes the proof of the identity \eqref{eqn: range of P}. Now we turn to the proof of the main body of the theorem. Let $f \in \clh_u$. In particular, $\|P_1 P_2 f\| = \|f\|$, and then, \eqref{eqn: range of P} implies
\[
f \in \text{ran} P_1 \cap \text{ran} P_2.
\]
Therefore, $P_1P_2 f = f$, and we conclude that $f \in \ker (P_1 P_2 - I)$. This proves that $\clh_u \subseteq \ker (P_1 P_2 - I)$ and $\clh_u \subseteq \cls_{P_1 P_2}$. For the reverse inclusion, pick $g \in \ker (P_1 P_2 - I)$. Then $\|P_1 P_2 g\| = \|g\|$ and \eqref{eqn: range of P} together imply that
\[
g \in \text{ran} P_1 \cap \text{ran} P_2,
\]
and hence $P_2 P_1 g = g$. It follows that
\[
P_1 P_2 g = (P_1 P_2)^* g = g,
\]
and hence $g \in \clh_u$. This proves that $\ker (P_1 P_2 - I) \subseteq \cls_{P_1 P_2}$ and $\cls_{P_1 P_2} \subseteq \clh_u$. Therefore, $\clh_u = \ker (P_1 P_2 - I) = \cls_{P_1 P_2}$, which completes the proof of the theorem.
\end{proof}

In particular, the cnu part of $P_1 P_2$ is given by
\[
\clh_{cnu} = (\ker (P_1 P_2 - I))^\perp.
\]
However, some more can be said:

\begin{theorem}\label{thm: cnu}
If $T \in \clb(\clh)$ is a product of two projections, then
\[
\clh_{cnu} = \ker T \bigvee \ker T^*.
\]
\end{theorem}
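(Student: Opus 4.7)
The plan is to reduce the claim to Theorem \ref{thm: cnu ker T-I} by using the canonical factorization. Since $T$ is a product of two projections, Corollary \ref{cor: Crimmins 1} gives us the distinguished factorization
\[
T = P_{\overline{\text{ran}\,T}} \, P_{\overline{\text{ran}\,T^*}},
\]
so we may take $P_1 = P_{\overline{\text{ran}\,T}}$ and $P_2 = P_{\overline{\text{ran}\,T^*}}$ in the statement of Theorem \ref{thm: cnu ker T-I}. With this specific choice, $\text{ran}\,P_1 = \overline{\text{ran}\,T}$ and $\text{ran}\,P_2 = \overline{\text{ran}\,T^*}$, and the previous theorem immediately yields
\[
\clh_u \;=\; \overline{\text{ran}\,T} \,\cap\, \overline{\text{ran}\,T^*}.
\]

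The next step is to take orthogonal complements. Since $\clh = \clh_u \oplus \clh_{cnu}$, we have $\clh_{cnu} = \clh_u^\perp$, and for any two closed subspaces $\cla, \clb \subseteq \clh$ the standard identity $(\cla \cap \clb)^\perp = \cla^\perp \vee \clb^\perp$ applies. Applying this with $\cla = \overline{\text{ran}\,T}$ and $\clb = \overline{\text{ran}\,T^*}$, and using $(\overline{\text{ran}\,T})^\perp = \ker T^*$ and $(\overline{\text{ran}\,T^*})^\perp = \ker T$, we conclude that
\[
\clh_{cnu} \;=\; \bigl(\overline{\text{ran}\,T} \cap \overline{\text{ran}\,T^*}\bigr)^\perp \;=\; \ker T^* \vee \ker T,
\]
which is exactly the desired formula.

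This argument is essentially a bookkeeping exercise once Theorem \ref{thm: cnu ker T-I} and Corollary \ref{cor: Crimmins 1} are in hand, so I do not expect any serious obstacle. The only subtlety worth double-checking is whether one really wants to invoke Theorem \ref{thm: cnu ker T-I} with the specific canonical pair $(P_{\overline{\text{ran}\,T}}, P_{\overline{\text{ran}\,T^*}})$ rather than with an arbitrary factorization of $T$; using the canonical pair is cleaner because it converts the abstract subspaces $\text{ran}\,P_i$ directly into $\overline{\text{ran}\,T}$ and $\overline{\text{ran}\,T^*}$, which is what the final formula requires. If one preferred to start from an arbitrary factorization $T = P_1 P_2$, the same conclusion would follow after noting that $\overline{\text{ran}\,T} \subseteq \text{ran}\,P_1$ and $\overline{\text{ran}\,T^*} \subseteq \text{ran}\,P_2$ (trivially), while the reverse containment $\text{ran}\,P_1 \cap \text{ran}\,P_2 \subseteq \overline{\text{ran}\,T} \cap \overline{\text{ran}\,T^*}$ follows since any $f$ in the intersection satisfies $Tf = P_1 P_2 f = f$ and $T^* f = f$, placing $f$ in both ranges.
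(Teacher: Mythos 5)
Your argument is correct, and it is a genuinely different (and shorter) route than the paper's. You reduce everything to two prior results: Crimmins' canonical factorization $T = P_{\overline{\text{ran}}T}P_{\overline{\text{ran}}T^*}$ (Corollary \ref{cor: Crimmins 1}) identifies the unitary part, via Theorem \ref{thm: cnu ker T-I}, as $\clh_u = \overline{\text{ran}\,T} \cap \overline{\text{ran}\,T^*}$, and then the lattice identity $(\cla \cap \clb)^\perp = \cla^\perp \vee \clb^\perp$ for closed subspaces, together with $(\overline{\text{ran}\,T})^\perp = \ker T^*$ and $(\overline{\text{ran}\,T^*})^\perp = \ker T$, gives $\clh_{cnu} = \clh_u^\perp = \ker T \vee \ker T^*$ in one line. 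All the ingredients check out, including your side remark that for an arbitrary factorization $T = P_1P_2$ one still has $\text{ran}\,P_1 \cap \text{ran}\,P_2 = \overline{\text{ran}\,T} \cap \overline{\text{ran}\,T^*}$. The paper instead argues directly: it uses the Crimmins identity $TT^*T = T^2$ to establish $(T^* - I)\,\overline{T\clh} \subseteq \ker T$, invokes the general representation $\clh_{cnu} = (\ker(T-I))^\perp = \overline{\text{ran}(T^* - I)}$, and splits $f = f_1 \oplus f_2 \in \ker T^* \oplus \overline{\text{ran}\,T}$ to get the inclusion $\clh_{cnu} \subseteq \ker T \vee \ker T^*$; the reverse inclusion is then a hands-on computation showing $g_1 + g_2 \perp \clh_u$ for $g_1 \in \ker T^*$, $g_2 \in \ker T$. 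Your version is cleaner and makes transparent that the theorem is just the orthocomplemented form of Theorem \ref{thm: cnu ker T-I} once the canonical factorization is in hand; the paper's version avoids the de Morgan identity and exhibits explicitly how a vector of $\overline{\text{ran}(T^*-I)}$ decomposes into kernel pieces, which is more self-contained at the level of operator identities but considerably longer.
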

\begin{proof}
By Corollary \ref{cor: Crimmins 1}, we know that
\[
T = P_{\overline{\text{ran} T}} P_{\overline{\text{ran} T^*}}.
\]
By Theorem \ref{thm: cnu ker T-I}, we know that
\[
\clh_u = \overline{\text{ran} T} \cap \overline{\text{ran} T^*}.
\]
Since $\ker T = (\overline{\text{ran} T^*})^\perp$, we have
\[
\clh_{cnu} = \clh_u^\perp = (\overline{\text{ran} T} \cap \overline{\text{ran} T^*})^\perp = \ker T \bigvee \ker T^*,
\]
which completes the proof of the theorem.
\end{proof}

Given this and Theorem \ref{thm: cnu ker T-I}, the canonical decompositions for contractions that are products of two projections are now concrete.

We end this section with an application of Theorem \ref{thm: cnu ker T-I} to a concrete situation, like inner projections defined on the polydisc (see the definition in \eqref{eqn: inn prof}). Let $T \in \clb(H^2(\D^n))$ be a nonzero operator. Suppose $T$ is a product of two inner projections, that is, there exist two inner functions $\vp$ and $\psi$ in $H^\infty(\D^n)$ such that
\[
T = P_{\vp H^2(\D^n)} P_{\psi H^2(\D^n)}.
\]
Theorem \ref{thm: cnu ker T-I} yields
\begin{equation}\label{cor: cnu=0}
\ker (T-I) = H^2(\D^n)_u = \vp H^2(\D^n) \cap \psi H^2(\D^n).
\end{equation}

On operators that cannot be expressed as a product of two inner projections, we have the following result:

\begin{corollary}\label{cor: not prod of inner}
If $T \in \clb(H^2(\D^n))$ is a cnu contraction, then $T$ cannot be expressed as a product of two inner projections. 
\end{corollary}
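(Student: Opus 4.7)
The plan is to argue by contradiction, leveraging the identification of the unitary part $\clh_u$ in equation~\eqref{cor: cnu=0}. Assume to the contrary that $T$ is a cnu contraction and that there exist inner functions $\vp, \psi \in H^\infty(\D^n)$ with
\[
T = P_{\vp H^2(\D^n)} P_{\psi H^2(\D^n)}.
\]
Then \eqref{cor: cnu=0}, which is an immediate specialization of Theorem~\ref{thm: cnu ker T-I} to the inner-projection setting, tells us that the unitary part of $T$ is exactly
\[
\clh_u = \vp H^2(\D^n) \cap \psi H^2(\D^n).
\]

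The next step is to observe that this intersection is automatically nontrivial whenever $\vp$ and $\psi$ are inner. Indeed, the product $\vp \psi$ is itself a nonzero inner function in $H^\infty(\D^n)$, and it lies in both $\vp H^2(\D^n)$ (as $\vp \cdot \psi$) and $\psi H^2(\D^n)$ (as $\psi \cdot \vp$). Hence $\vp \psi$ is a nonzero element of $\vp H^2(\D^n) \cap \psi H^2(\D^n)$, so $\clh_u \neq \{0\}$.

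This directly contradicts the hypothesis that $T$ is completely non-unitary, since by definition a cnu contraction has trivial unitary part in its Nagy-Foia\c{s}--Langer canonical decomposition. Therefore no such factorization can exist. The argument is essentially immediate from \eqref{cor: cnu=0}, so there is no serious obstacle — the only content is the remark that the intersection of two inner submodules of $H^2(\D^n)$ always contains the principal submodule generated by the product of the inner functions, hence is always nonzero.
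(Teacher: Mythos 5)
Your argument is correct and coincides with the paper's own proof: both invoke \eqref{cor: cnu=0} to identify $\clh_u$ with $\vp H^2(\D^n) \cap \psi H^2(\D^n)$ and then note that the inner function $\vp\psi$ is a nonzero element of this intersection, contradicting $\clh_u = \{0\}$. No differences worth noting.
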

\begin{proof}
Suppose, if possible, that $T$ is a product of two inner projections. Since $T$ is cnu, we have that $H^2(\D^n)_u = \{0\}$ \cite{NF Book}. By \eqref{cor: cnu=0}, we conclude
\[
\{0\} = \vp H^2(\D^n) \cap \psi H^2(\D^n),
\]
which is a clear contradiction as $\vp \psi \in H^\infty(\D^n)$ is an inner function and
\[
\vp \psi \in \vp H^2(\D^n) \cap \psi H^2(\D^n).
\]
This completes the proof of the corollary.
\end{proof}

In a more general way, operators with unitary parts cannot be expressed as products of inner projections. 

\section{von Neumann's perspectives}\label{sec: von Neum}

When discussing products of two projections, one immediately draws a connection with the classical von Neumann's alternating orthogonal projections formula \cite{von}. This formula relates to the limit of powers of products of two projections. We begin with recalling a notation. Given projections $P_1$ and $P_2$ defined on some Hilbert space $\clh$, the closed subspace $\cls_{P_1 P_2}$ is the common range space defined by (see \eqref{eqn: S P})
\[
\cls_{P_1 P_2} = \text{ran} P_1 \cap \text{ran} P_2.
\]
The iterated product of projections then satisfies the following property, which was proved by von Neumann in 1933 \cite{von}:
\begin{equation}\label{eqn: vN limit}
\text{SOT-} \lim_{m\raro \infty} (P_1 P_2)^m = P_{\cls_{P_1 P_2}},
\end{equation}
where SOT refers to the strong operator topology. This result, known as von Neumann's alternating orthogonal projections formula, has historically been associated with notable mathematicians, including Aronszajn \cite{Aro}, Browder \cite{FB}, Kakutani \cite{Kak}, Nakano \cite{Nak}, and Wiener \cite{NW}, among others. We direct the reader to \cite{NS} for a comprehensive account of the development and a proof. There is more to say in this line, and it will be crucial to our understanding. In fact, we further have (see the proof of Theorem 1 in \cite{NS})
\[
\cls_{P_1 P_2} = \ker (I - P_2 P_1).
\]
Now we bring in Theorem \ref{thm: cnu ker T-I}, which tells us, in addition, that
\[
\clh_u = \ker (I - P_2 P_1) = \cls_{P_1 P_2}.
\]
This proves the following, which appears to be new information to von Neumann's alternating projection formula: 

\begin{theorem}\label{thm: vn 1}
Let $P_1, P_2 \in \clb(\clh)$ be two projections. Then
\[
\text{SOT-} \lim_{m\raro \infty} (P_1 P_2)^m = P_{\clh_u},
\]
where $\clh_u$ is the unitary part of the canonical decomposition of the contraction $P_1 P_2$.
\end{theorem}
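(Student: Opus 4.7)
The plan is essentially to stitch together two facts that have already been laid out in the excerpt, so the proof should be short and almost mechanical. First I would invoke von Neumann's alternating orthogonal projections formula \eqref{eqn: vN limit}, which gives
\[
\text{SOT-}\lim_{m \raro \infty} (P_1 P_2)^m = P_{\cls_{P_1 P_2}},
\]
with $\cls_{P_1 P_2} = \text{ran} P_1 \cap \text{ran} P_2$. This is the classical input and nothing needs to be reproved; it is simply cited.

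Next I would bring in Theorem \ref{thm: cnu ker T-I}, applied to the contraction $T = P_1 P_2$. That theorem identifies the unitary part of the canonical decomposition as
\[
\clh_u = \ker(P_1 P_2 - I) = \cls_{P_1 P_2}.
\]
In particular, $P_{\cls_{P_1 P_2}} = P_{\clh_u}$, and substituting this equality into the von Neumann limit above yields the claim directly.

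There is, I think, no real obstacle here; the only subtlety worth flagging is a possible symmetry concern, since the discussion preceding the theorem passes through $\ker(I - P_2 P_1)$ while the theorem is stated for $P_1 P_2$. This is harmless: $\cls_{P_1 P_2}$ is symmetric in $P_1$ and $P_2$, and Theorem \ref{thm: cnu ker T-I} applies equally to $P_1 P_2$ and $P_2 P_1$, so no extra argument is required. Thus the proof would consist of a one-line citation of \eqref{eqn: vN limit}, a one-line appeal to Theorem \ref{thm: cnu ker T-I} to rewrite the projection's range, and the conclusion.
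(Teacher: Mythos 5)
Your proposal is correct and follows exactly the paper's argument: the paper likewise derives the theorem by citing von Neumann's formula \eqref{eqn: vN limit} and then invoking Theorem \ref{thm: cnu ker T-I} to identify $\cls_{P_1 P_2}$ with $\clh_u$. Your remark on the harmless $P_1P_2$ versus $P_2P_1$ asymmetry is a fair observation, since the paper's own discussion passes through $\ker(I - P_2P_1)$.
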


Next, we introduce another significant class of contractions that serves a profound role in the theory of operators. Let $T \in \clb(\clh)$ be a contraction. If in addition
\[
\text{SOT}-\lim_{m \raro \infty} T^{*m} = 0,
\]
then we say that $T$ is a $C_{\cdot 0}$ contraction. If both $T$ and $T^*$ are in $C_{\cdot 0}$, then we simply write \cite[page 72]{NF Book}
\[
T \in C_{0 0}.
\]

Evidently, if $T \in \clb(\clh)$ is a $C_{0 0}$ contraction, then its unitary part is zero, that is, $\clh_u = \{0\}$. In the following, we focus on contractions that are products of pairs of projections. In fact, in the following result, we have added yet another new information to von Neumann's iterated products of projections:

\begin{theorem}\label{thm: vN and proj}
Let $T \in \clb(\clh)$ be a product of two projections. Then
\[
T|_{\clh_{cnu}} \in C_{0 0}.
\]
\end{theorem}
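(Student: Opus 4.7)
The plan is to apply Theorem \ref{thm: vn 1} to both $T$ and its adjoint $T^*$, and then restrict to $\clh_{cnu}$. Recall that $T|_{\clh_{cnu}} \in C_{00}$ means both $\text{SOT-}\lim_{m \to \infty} (T|_{\clh_{cnu}})^m = 0$ and $\text{SOT-}\lim_{m \to \infty} (T|_{\clh_{cnu}})^{*m} = 0$, so two separate SOT limits must be handled.

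For the first limit, write $T = P_1 P_2$ for some projections $P_1, P_2 \in \clb(\clh)$. Theorem \ref{thm: vn 1} directly gives
\[
\text{SOT-}\lim_{m \to \infty} T^m = P_{\clh_u}.
\]
Since $\clh_{cnu}$ reduces $T$, the restriction $(T|_{\clh_{cnu}})^m$ coincides with $T^m|_{\clh_{cnu}}$. For any $h \in \clh_{cnu} = \clh_u^\perp$, we have $P_{\clh_u} h = 0$, so the SOT limit of $(T|_{\clh_{cnu}})^m$ is $0$, i.e., $T|_{\clh_{cnu}} \in C_{0 \cdot}$.

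For the second limit, the crucial observation is that $T^* = P_2 P_1$ is itself a product of two projections, so Theorem \ref{thm: vn 1} applies equally to $T^*$. This yields
\[
\text{SOT-}\lim_{m \to \infty} T^{*m} = P_{\widetilde{\clh}_u},
\]
where $\widetilde{\clh}_u$ denotes the unitary part of $T^*$. Here one needs the routine but important fact that the canonical decomposition of $T^*$ coincides with that of $T$: the subspaces $\clh_u$ and $\clh_{cnu}$ reduce $T$, hence also $T^*$, and $T^*|_{\clh_u} = (T|_{\clh_u})^*$ is unitary while $T^*|_{\clh_{cnu}} = (T|_{\clh_{cnu}})^*$ is cnu (since $T$ and $T^*$ share the same reducing subspaces, and adjoints of unitaries are unitaries). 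Therefore $\widetilde{\clh}_u = \clh_u$, and exactly as above, $T^{*m} h \to 0$ for every $h \in \clh_{cnu}$, so $T|_{\clh_{cnu}} \in C_{\cdot 0}$.

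Combining these two SOT convergences, we conclude $T|_{\clh_{cnu}} \in C_{00}$. The argument is really just a short deduction from the previously established Theorem \ref{thm: vn 1}, so there is no substantial obstacle; the only subtlety worth stating carefully is the symmetry between the canonical decompositions of $T$ and $T^* = P_2 P_1$, which is what lets a single application of Theorem \ref{thm: vn 1} deliver both halves of the $C_{00}$ conclusion.
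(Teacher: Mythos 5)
Your proof is correct, and while it runs on the same engine as the paper's --- Theorem \ref{thm: vn 1} applied to both $T = P_1P_2$ and $T^* = P_2P_1$ --- it deploys that theorem differently. The paper first passes to the compression: it combines Crimmins' identity $TT^*T = T^2$ with the block-diagonal form of $T$ on $\clh_u \oplus \clh_{cnu}$ to show that $T_{cnu} := T|_{\clh_{cnu}}$ satisfies $T_{cnu}T_{cnu}^*T_{cnu} = T_{cnu}^2$, hence is itself a product of two projections on $\clh_{cnu}$, and only then applies Theorem \ref{thm: vn 1} to $T_{cnu}$, whose unitary part vanishes because $T_{cnu}$ is cnu. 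You instead apply Theorem \ref{thm: vn 1} on the full space and restrict the limit: $T^m h \to P_{\clh_u}h = 0$ for $h \in \clh_{cnu}$, and likewise for $T^{*m}$ once the unitary part of $T^*$ is identified with that of $T$ --- a step you justify via uniqueness of the canonical decomposition, and which also drops out of Theorem \ref{thm: cnu ker T-I}, since both unitary parts equal $\text{ran}\, P_1 \cap \text{ran}\, P_2$. Your route is shorter and avoids the intermediate verification that the compression is again a product of two projections; the paper's route records that fact as a byproduct, which is of some independent interest. Both arguments correctly handle the two separate SOT limits required for the $C_{00}$ conclusion.
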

\begin{proof}
Assume without loss of generality that $T \in \clb(\clh)$ is a nonzero operator. Suppose $T = P_1 P_2$ for some projections $P_1$ and $P_2$ in $\clb(\clh)$. By part (4) of Theorem \ref{cor: mixed classif of PP} (which is due to Crimmins), we know that
\[
T T^* T = T^2.
\]
Consider the canonical decomposition of $T$ as $\clh = \clh_u \oplus \clh_{cnu}$. We know that $T_{cnu} := T|_{\clh_{cnu}}$ is a cnu contraction. Since $\clh_{cnu}$ reduces $T$, it follows that
\[
T = \begin{bmatrix}
T_u & 0 \\
0 & T_{cnu}
\end{bmatrix},
\]
where $T_u := T|_{\clh_u}$ is the unitary part of $T$. This immediately implies that
\[
T_{cnu} (T_{cnu})^* T_{cnu} = T_{cnu}^2.
\]
Applying Theorem \ref{cor: mixed classif of PP} again to $T_{cnu}$ on $\clh_{cnu}$, we conclude that it is the product of two projections. In particular, $T_{cnu}$ on $\clh_{cnu}$ is a contraction. We write the corresponding canonical decomposition of $T_{cnu}$ as
\[
\clh_{cnu} = \tilde{\clh}_u \oplus \tilde{\clh}_{cnu},
\]
where $\tilde{\clh}_u$ is the unitary part of $T_{cnu}$. Theorem \ref{thm: vn 1} now tells us that
\[
\text{SOT-} \lim_{m\raro \infty} T_{cnu}^m = P_{\tilde{\clh}_u}.
\]
However, $T_{cnu}$ is a cnu contraction, and hence, its unitary part is trivial, that is,
\[
\tilde{\clh}_u = \{0\},
\]
which says that $T|_{cnu} \in C_{\cdot 0}$. Finally, the fact that $T$ is a product of two projections implies that $T^*$ is also a product of two projections, thereby proving that $T|_{cnu}$ is in $C_{\cdot 0}$. This completes the proof of the theorem.
\end{proof}

Recall from Theorem \ref{thm: cnu}, given $T \in \clb(\clh)$, which is a product of two projections, we have
\[
\clh_{cnu} = \ker T \bigvee \ker T^*.
\]
Therefore, we have somewhat clear picture of the cnu part $T|_{\clh_{cnu}}$ of $T$. This observation shows significant potential for exploring the structure of this class of operators. However, we save this topic and direction for future work.

 
 

\newsection{Inner projections}\label{sec:inner proj}

This section presents a specific scenario of the product of projections where one might expect analytic solutions. The objective is to relate the concept of inner functions to projections, which was first explored under the name of inner projections in \cite{Debnath}. Recall that a projection $P \in \clb(H^2(\D^n))$ is called an \textit{inner projection} if there exists an inner function $\vp \in H^\infty(\D^n)$ such that
\[
P = P_{\vp H^2(\D^n)},
\]
or equivalently, $P = T_\vp T_\vp^*$.

Let $T \in \clb(H^2(\D^n))$. We already have learned from Corollary \ref{cor: Crimmins 1} that $T$ is a product of two projections if and only if $T = P_{\overline{\text{ran} T}} P_{\overline{\text{ran} T^*}}$. In this section, we additionally demand that the factors be inner projections, and in exchange we look for an analytic answer. Stated differently, our focus will be on identifying analytic interpretations of the projection factors. For this, we introduce the notion of lcm of bounded linear operators on $H^2(\D^n)$. Given a nonzero operator $T \in \clb(H^2(\D^n))$, define the set
\[
\cli_T =\{\vp \in H^\infty(\D^n): \vp \text{ is inner, and }{\text{ran} T} \subseteq \vp H^2(\D^n)\}.
\]
Since the constant function $1 \in H^\infty(\D^n)$ is also in $\cli_T$ (as, of course, $\text{ran} T \subseteq H^2(\D^n)$), it readily follows that
\[
\cli_T \neq \emptyset.
\]
We need to recall the notion of the lcm of inner functions. In what follows, our index set will always be nonempty, and the collection of functions will never be a singleton zero function. 

\begin{definition}\label{def: lcm}
Given a set of inner functions $\{\vp_\alpha\}_{\alpha \in \Lambda} \subseteq H^\infty(\D^n)$, an inner function $\vp \in H^\infty(\D^n)$ is said to be the \textit{least common multiple} (or \textit{lcm} in short) of $\{\vp_\alpha\}_{\alpha \in \Lambda}$ if
\begin{enumerate}
\item  $\vp_\alpha$ divides $\vp$ for all $\alpha \in \Lambda$, and
\item if $\vp_\alpha$ divides an inner function $\psi \in H^\infty(\D)$ for all $\alpha \in \Lambda$, then $\vp$ also divides $\psi$.
\end{enumerate}
\end{definition}

Clearly, the lcm function $\vp$, if it exists, is unique up to a unimodular constant. We simply denote it as
\[
\vp = \text{lcm}\{\vp_\alpha\}_{\alpha \in \Lambda}.
\]
Returning to the range of inner projections, we now want to investigate when the lcm exists. The result is a straightforward application of Beurling's theorem for the case when $n = 1$, but becomes complex when $n > 1$. We record the following well-known lemma and include a proof for completeness.

Recall that a nonzero operator $T \in \clb(H^2(\D^n))$ is said to satisfy the \textit{lcm property} if both $\cli_T$ and $\cli_{T^*}$ contains a lcm. We define the unique inner function (unique upto the multiplication by a unimodular scalar constant) $\vp_T \in H^\infty(\D)$ by
\[
\vp_T = \text{lcm} \cli_T.
\]
Given this notation, we also have the following:
\[
\vp_{T^*} = \text{lcm} \cli_{T^*},
\]
where $\cli_{T^*}$ is given by (as per the definition of $\cli_{T}$)
\[
\cli_{T^*} = \{\vp \in H^\infty(\D^n): \vp \text{ is inner, and }{\text{ran} T^*} \subseteq \vp H^2(\D^n)\}.
\]
Now that we have the pairs of inner functions $\vp_T$ and $\vp_{T^*}$ as constructed above, we are ready to provide an analytic description of operators that arise as products of inner projections. 

\begin{theorem}\label{thm: prod of inner}
Let $T \in \clb(H^2(\D^n))$ be a nonzero operator satisfying the lcm property. Then $T$ is a product of inner projection if and only if
\[
T = P_{\vp_T H^2(\D^n)} P_{\vp_{T^*} H^2(\D^n)}.
\]
\end{theorem}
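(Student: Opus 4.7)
The ``if'' direction is immediate since $P_{\vp_T H^2(\D^n)}$ and $P_{\vp_{T^*} H^2(\D^n)}$ are, by the definition in \eqref{eqn: inn prof}, inner projections, exhibiting $T$ as a product of two such.

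For the nontrivial direction, I would start with an arbitrary factorization $T = P_{\vp H^2(\D^n)} P_{\psi H^2(\D^n)}$ (with $\vp, \psi \in H^\infty(\D^n)$ inner) guaranteed by the hypothesis, and then ``upgrade'' the left factor to $P_{\vp_T H^2(\D^n)}$. The argument rests on three elementary projection facts. First, $\text{ran}\, T \subseteq \vp H^2(\D^n)$ places $\vp$ in $\cli_T$; combined with the intersection formula $\vp_T H^2(\D^n) = \bigcap_{\chi \in \cli_T} \chi H^2(\D^n)$ from Lemma \ref{lemma: lcm}, this forces the nesting $\vp_T H^2(\D^n) \subseteq \vp H^2(\D^n)$. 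Second, the same intersection formula also gives $\text{ran}\, T \subseteq \vp_T H^2(\D^n)$ (so in fact $\vp_T \in \cli_T$), whence $P_{\vp_T H^2(\D^n)} T = T$. Third, whenever $\clm_1 \subseteq \clm_2$ are closed subspaces of a Hilbert space, the standard identity $P_{\clm_1} P_{\clm_2} = P_{\clm_1}$ holds.

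Combining these three facts,
\[
T = P_{\vp_T H^2(\D^n)} T = P_{\vp_T H^2(\D^n)} P_{\vp H^2(\D^n)} P_{\psi H^2(\D^n)} = P_{\vp_T H^2(\D^n)} P_{\psi H^2(\D^n)}.
\]
Taking adjoints gives $T^* = P_{\psi H^2(\D^n)} P_{\vp_T H^2(\D^n)}$, which is again a product of two inner projections. Running the identical three-step argument for $T^*$ (now using $\psi \in \cli_{T^*}$ to obtain $\vp_{T^*} H^2(\D^n) \subseteq \psi H^2(\D^n)$) yields $T^* = P_{\vp_{T^*} H^2(\D^n)} P_{\vp_T H^2(\D^n)}$. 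A final adjoint then produces the desired identity $T = P_{\vp_T H^2(\D^n)} P_{\vp_{T^*} H^2(\D^n)}$.

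The main obstacle, if any, is simply spotting the correct sequencing of projection identities. Notably, this route sidesteps the a priori harder task of verifying directly that $\overline{\text{ran}\, T}$ is itself a doubly commuting invariant subspace of $H^2(\D^n)$, which would be the alternative strategy via Crimmins' canonical factorization (Corollary \ref{cor: Crimmins 1}) together with Mandrekar's characterization of doubly commuting invariant subspaces.
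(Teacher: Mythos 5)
Your proof is correct and follows essentially the same route as the paper's: both arguments rest on the facts that $\overline{\text{ran}\,T} \subseteq \vp_T H^2(\D^n)$ (so $P_{\vp_T H^2(\D^n)}$ acts as the identity on the range of $T$, and dually for $T^*$) and that $\vp_1$ divides $\vp_T$ and $\vp_2$ divides $\vp_{T^*}$, so the adjacent projections collapse. The only cosmetic difference is sequencing: you upgrade the left factor, take adjoints, and repeat, whereas the paper first establishes the sandwich identity $T = P_{\vp_T H^2(\D^n)}\, T\, P_{\vp_{T^*} H^2(\D^n)}$ and then substitutes the given factorization.
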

\begin{proof}
We only need to prove the necessary part. Suppose there are inner functions $\vp_1$ and $\vp_2$ in $H^\infty(\D^n)$ such that $T = P_{\vp_1 H^2(\D^n)} P_{\vp_2 H^2(\D^n)}$. We recall, based on the construction of the set $\cli_{T^*}$, that
\[
\text{ran} T^* \subseteq \overline{\text{ran} T^*} \subseteq \vp_{T^*} H^2(\D^n),
\]
so that
\begin{equation}\label{eqn: proj 1}
P_{\overline{\text{ran} T^*}} = P_{\vp_{T^*} H^2(\D^n)} P_{\overline{\text{ran} T^*}}.
\end{equation}
In particular, $P_{\overline{\text{ran} T^*}}^* = P_{\overline{\text{ran} T^*}}$, or even the simple property of projections or set inclusions, implies that
\[
P_{\vp_{T^*} H^2(\D^n)} P_{\overline{\text{ran} T^*}} = P_{\overline{\text{ran} T^*}} P_{\vp_{T^*} H^2(\D^n)}.
\]
Similar to \eqref{eqn: proj 1}, or simply because $\overline{\text{ran} T^*} \subseteq \vp_{T^*} H^2(\D^n)$, we have
\[
T P_{\vp_{T^*} H^2(\D^n)} = T P_{\overline{\text{ran} T^*}} P_{\vp_{T^*} H^2(\D^n)},
\]
and then $P_{\vp_{T^*} H^2(\D^n)} P_{\overline{\text{ran} T^*}} = P_{\overline{\text{ran} T^*}} P_{\vp_{T^*} H^2(\D^n)}$ implies
\[
T P_{\vp_{T^*} H^2(\D^n)} = T P_{\vp_{T^*} H^2(\D^n)} P_{\overline{\text{ran} T^*}}.
\]
But, then \eqref{eqn: proj 1} yields
\[
T P_{\vp_{T^*} H^2(\D^n)} = T P_{\overline{\text{ran} T^*}}.
\]
Since $T|_{\ker T} = 0$, we immediately conclude that
\[
T = T P_{\vp_{T^*} H^2(\D^n)}.
\]
This identity is true for all nonzero $T \in \clb(H^2(\D^n))$. Consequently, if we apply this to $T^*$, then we see that
\[
T^* = T^* P_{\vp_{T} H^2(\D^n)}.
\]
Now, we first take the adjoint of this, and then apply the identity $T = T P_{\vp_{T^*} H^2(\D^n)}$ to deduce that
\[
T = P_{\vp_{T} H^2(\D^n)} T P_{\vp_{T^*} H^2(\D^n)}.
\]
Because we began with the factorization $T = P_{\vp_1 H^2(\D^n)} P_{\vp_2 H^2(\D^n)}$, for some inner functions $\vp_1, \vp_2 \in H^\infty(\D^n)$, we finally came to see that
\[
T =  P_{\vp_{T} H^2(\D^n)} (P_{\vp_1 H^2(\D^n)} P_{\vp_2 H^2(\D^n)}) P_{\vp_{T^*} H^2(\D^n)}.
\]
In addition, we are aware that $\vp_1$ divides $\vp_T$ and $\vp_2$ divides $\vp_{T^*}$ because of the property of lcm. This is equivalent to saying that
\[
P_{\vp_{T} H^2(\D^n)} P_{\vp_1 H^2(\D^n)} = P_{\vp_{T} H^2(\D^n)},
\]
and
\[
P_{\vp_2 H^2(\D^n)} P_{\vp_{T^*} H^2(\D^n)} = P_{\vp_{T^*} H^2(\D^n)}.
\]
As a result, $T = P_{\vp_T H^2(\D^n)} P_{\vp_{T^*} H^2(\D^n)}$, which concludes the proof of the theorem. 
\end{proof}

A comparison of Corollary \ref{cor: Crimmins 1} with the above shows that the latter corresponds to Crimmins’s variant of the product of inner projections.

One of the central questions addressed in \cite{Debnath} concerned the characterization of pairs of inner functions $\vp_1$  and $\vp_2$ in $H^\infty(\D^n)$ such that the inner projections $P_{\varphi_1 H^2(\D^n)}$ and $P_{\varphi_2 H^2(\D^n)}$ are commuting; equivalently, when is the product $P_{\varphi_1 H^2(\D^n)} P_{\varphi_1 H^2(\D^n)}$ itself a projection? The answer was given in \cite{Debnath} in terms of the inner functions themselves. Along this line, we ask the following: Given $T \in \mathcal{B}(H^2(\mathbb{D}^n))$ that is the product of two inner projections, how can we determine whether $T$ is also a projection? In this case, the answer is sought in terms of the operator $T$ itself. The following result provides our answer to this question.

\begin{theorem}\label{comm inner proj}
Let $T \in \clb(H^2(\D^n))$ be a product of two inner projections. Then $T$ is a projection if and only if $\ker T = \clq_{\varphi_{T}}$.
\end{theorem}
\begin{proof}	
Let $T = P_{\varphi_1 H^2(\D^n)} P_{\varphi_2 H^2(\D^n)}$, where $\varphi_1$ and $\varphi_2$ are inner functions in $H^\infty(\D^n)$. Suppose $T$ is a projection, that is,
\[
T = P_{\varphi_{1} H^2(\D^n) \cap \varphi_{2} H^2(\D^n)}.
\]
By \cite[Theorem 2.3]{Debnath}, or more precisely, by the discussion immediately following that theorem, there exists an inner function $\theta \in H^{\infty}(\D^n)$ such that
\[
\varphi_1 H^2(\D^n) \cap \varphi_2 H^2(\D^n) = \theta H^2(\D^n).
\]
In this case, it follows that
\[
T = P_{\theta H^2(\D^n)}.
\]
In particular, $\text{ran} T = \theta H^2(\D^n)$. Then the set $\cli_T$ becomes
\[
\cli_T = \{\varphi \in H^2(\D^n) : \varphi \text{ is inner and } \theta H^2(\D^n) \subseteq \varphi H^2(\D^n)\}.
\]
This implies that $\varphi_T = lcm \cli_T$ exists. By the definition of the lcm, it follows that there exists a scalar $\alpha \in \T$ such that
\[
\varphi_{T} = \alpha \theta,
\]
and hence
\[
\varphi_{T} H^2(\D^n) = \theta H^2(\D^n).
\]
We conclude that $T = P_{\theta H^2(\D^n)} = P_{\varphi_{T} H^2(\D^n)}$, and consequently
\[
\ker T = \clq_{\theta} = \clq_{\varphi_{T}}.
\]
Conversely, suppose that $\ker T = \clq_{\varphi_{T}}$. In other words, the lcm $\cli_{T}$ exists, and
\[
\overline{\text{ran}} T^* = \varphi_{T} H^2(\D^n).
\]
Thus $\varphi_{T^*} = lcm {\cli_{T^*}}$ also exists, where
\[
\cli_{T^*} = \{\varphi \in H^2(\D^n) : \varphi \text{ is inner and } ran T^* \subseteq \varphi H^2(\D^n)\},
\]
Thus, there exists a scalar $\alpha \in \T$ such that $\varphi_{T^*} = \alpha \varphi_{T}$, and hence
\[
\varphi_{T} H^2(\D^n) = \varphi_{T^*} H^2(\D^n).
\]
On the other hand, since $T$ satisfies the lcm property, Theorem $\ref{thm: prod of inner}$ implies that
\[
T = P_{\varphi_{T} H^2(\D^n)} P_{\varphi_{T^*} H^2(\D^n)},
\]
and hence $T = P_{\varphi_{T} H^2(\D^n)}$, which completes the proof.
\end{proof}

We remark that when $n=1$, every nonzero operator satisfies the lcm property. Thus, in the one-variable setting, the hypothesis of Theorem \ref{thm: prod of inner} is automatic. Although this follows (cf. \cite[page 21, Proposition 2.3]{Hari}) readily from the Beurling theorem, we include a detailed proof for the reader's convenience.

\begin{proposition}\label{lemma: lcm}
$\text{lcm} \cli_T$ exists for all nonzero $T \in \clb(H^2(\D))$.
\end{proposition}
\begin{proof}
Define
\[
\cls_T = \bigcap_{\vp \in \cli_T} \vp H^2(\D).
\]
Since
\[
\{0\} \neq \text{ran} T \subseteq \vp H^2(\D),
\]
for all $\vp \in \cli_T$, it follows that $\cls_T \neq \{0\}$. Since $\vp H^2(\D)$ is a closed $T_z$-invariant subspace of $H^2(\D)$ for all $\vp \in \cli_T$, it follows that $\cls_T$ is also a closed $T_z$-invariant subspace of $H^2(\D)$. According to the Beurling theorem, there exists an inner function $\vp \in H^\infty(\D)$ such that $\cls = \vp H^2(\D)$. It is now easy to see that $\vp = \text{lcm} \cli_T$.
\end{proof}

In particular, every nonzero operator in $H^2(\D)$ automatically satisfies the lcm property. The main obstacle to extending the above argument to several variables, $n > 1$, is that, given two inner functions $\vp_1$ and $\vp_2$ in $H^\infty(\D^n)$, it is not always the case that there exists an inner function $\theta \in H^\infty(\D^n)$ such that
\[
\varphi_1 H^2(\D^n) \cap \varphi_2 H^2(\D^n) = \theta H^2(\D^n).
\]

We conclude this section with the canonical decompositions of operators that result from products of inner projections, allowing us to once more link orthogonal decompositions with analytic objects. We focus on the single variable case. Let $T \in \clb(H^2(\D))$ be a product of inner projections. By \eqref{cor: cnu=0}, we know that $H^2(\D)_u = \vp_T H^2(\D) \cap \vp_{T^*} H^2(\D)$. Now, we know that
\[
\psi_T := gcd\{\vp_T, \vp_{T^*}\},
\]
is an inner function in $H^\infty(\D)$, and consequently, with respect to the Hilbert space decomposition $H^2(\D) = \psi_T H^2(\D) \oplus \clq_{\psi_T}$, we can write
\[
T = \begin{bmatrix}
I_{\psi_T H^2(\D)} & 0
\\
0 & T|_{\clq_{\psi_T}}
\end{bmatrix}.
\]
This decomposition bears some resemblance to the structure of matrices that result from the finite products of projections (see Wu \cite[Theorem 4.6]{Wu} and Oikhberg \cite{Timur}).

\section{Model projections}\label{sec: model proj}

Recall that the model space $\clq_\vp$ corresponding to an inner function $\vp \in H^\infty(\D^n)$ is the quotient space defined by
\[
\clq_\vp = H^2(\D^n)\ominus \vp H^2(\D^n).
\]
We refer to $P_{\clq_\vp}$ the projection onto the model space $\clq_\vp$ as a \textit{model projection}. This section seeks to understand operators that can be represented as products of two model projections. Similarly to inner projections, if a nonzero operator $T \in \clb(H^2(\D^n))$ can be represented as a product of two model projections, we can conclude that both the $\text{ran}T$ and $\text{ran} T^*$ are nonzero. For each nonzero $T \in \clb(H^2(\D^n))$, we define
\[
\clj_T = \{\vp \in H^\infty(\D^n): \vp \text{ is inner and } \vp H^2(\D^n) \subseteq \ker T^*\}.
\]
Clearly, the set $\clj_T$ defined above could potentially be empty (for instance, whenever $T$ is a co-isometry).

\begin{definition}\label{def: gcd}
Given a collection of inner functions $\{\vp_\alpha\}_{\alpha \in \Lambda} \subseteq H^\infty(\D^n)$, an inner function $\vp \in H^\infty(\D^n)$ is said to be the greatest common divisor (or gcd in short) of $\{\vp_\alpha\}_{\alpha \in \Lambda}$ if
\begin{enumerate}
\item $\vp$ divides $\vp_\alpha$ for all $\alpha \in \Lambda$, and
\item if an inner function $\psi \in H^\infty(\D)$ divides $\vp_\alpha$ for all $\alpha \in \Lambda$, then $\psi$ also divides $\vp$.
\end{enumerate}
\end{definition}

Recall that a nonzero operator $T \in \clb(H^2(\D^n))$ satisfies the \textit{gcd property} if both $\clj_{T}$ and $\clj_{T^*}$ contain a gcd. In the one-variable case, Beurling's theorem implies that every nonzero operator $T$ in $\clb(H^2(\D))$ for which both $\clj_{T}$ and $\clj_{T^*}$ are nonempty automatically satisfies the gcd property.

For a nonzero operator $T \in \clb(H^2(\D^n))$ satisfying the gcd property, we denote the unique (upto multiplication by a unimodular constant) gcd by
\[
\psi_T = gcd \clj_T. 
\]
Similarly, we have
\[
\psi_{T^*} = gcd \clj_{T^*}.
\]

\begin{theorem}\label{thm: model proj}
Let $T \in \clb(H^2(\D^n))$ be a nonzero operator satisfying the gcd property. Then $T$ is a product of two model projections if and only if
\[
T = P_{\clq_{\psi_T}} P_{\clq_{\psi_{T^*}}}.
\]
\end{theorem}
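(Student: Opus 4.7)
The sufficiency is immediate, since $P_{\clq_{\psi_T}}$ and $P_{\clq_{\psi_{T^*}}}$ are themselves model projections, so I would focus on the necessity. Assume $T = P_{\clq_{\vp_1}} P_{\clq_{\vp_2}}$ for some inner $\vp_1, \vp_2 \in H^\infty(\D^n)$. The first task is to verify that $\clj_T$ and $\clj_{T^*}$ are nonempty, so that $\psi_T$ and $\psi_{T^*}$ are supplied by Lemma \ref{lemma: gcd}. By Douglas' lemma, $\overline{\text{ran} T} \subseteq \clq_{\vp_1}$, and taking orthogonal complements gives $\vp_1 H^2(\D^n) \subseteq \ker T^*$; hence $\vp_1 \in \clj_T$. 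Applying the same reasoning to $T^*$ yields $\vp_2 \in \clj_{T^*}$.

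The crux is to promote $\vp_1, \vp_2$ to $\psi_T, \psi_{T^*}$. For every $\vp \in \clj_T$, the defining condition $\vp H^2(\D^n) \subseteq \ker T^*$ is equivalent to $\overline{\text{ran} T} \subseteq \clq_\vp$, so
\[
\overline{\text{ran} T} \subseteq \bigcap_{\vp \in \clj_T} \clq_\vp = \Big(\bigvee_{\vp \in \clj_T} \vp H^2(\D^n)\Big)^\perp.
\]
I would then invoke the identification $\bigvee_{\vp \in \clj_T} \vp H^2(\D^n) = \psi_T H^2(\D^n)$, the structural content behind Lemma \ref{lemma: gcd} (dual to the intersection argument of Lemma \ref{lemma: lcm}). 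This yields $\overline{\text{ran} T} \subseteq \clq_{\psi_T}$, equivalently $P_{\clq_{\psi_T}} T = T$. Applying the same argument to $T^*$ produces $T P_{\clq_{\psi_{T^*}}} = T$.

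Combining, $T = P_{\clq_{\psi_T}} T P_{\clq_{\psi_{T^*}}} = P_{\clq_{\psi_T}} P_{\clq_{\vp_1}} P_{\clq_{\vp_2}} P_{\clq_{\psi_{T^*}}}$. Since $\psi_T$ divides $\vp_1$, we have $\vp_1 H^2(\D^n) \subseteq \psi_T H^2(\D^n)$, hence $\clq_{\psi_T} \subseteq \clq_{\vp_1}$, which absorbs the $P_{\clq_{\vp_1}}$ factor as $P_{\clq_{\psi_T}} P_{\clq_{\vp_1}} = P_{\clq_{\psi_T}}$. Symmetrically $P_{\clq_{\vp_2}} P_{\clq_{\psi_{T^*}}} = P_{\clq_{\psi_{T^*}}}$, and therefore $T = P_{\clq_{\psi_T}} P_{\clq_{\psi_{T^*}}}$, completing the necessary direction.

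I expect the main obstacle to lie not in this theorem itself but in the step it defers to Lemma \ref{lemma: gcd}, namely the identification $\bigvee_{\vp \in \clj_T} \vp H^2(\D^n) = \psi_T H^2(\D^n)$. For $n = 1$ this is immediate from Beurling's theorem. For $n > 1$ one must check that the closed invariant subspace $\bigvee_{\vp \in \clj_T} \vp H^2(\D^n)$ is doubly commuting, so that Mandrekar's theorem identifies it with $\eta H^2(\D^n)$ for some inner $\eta$; the gcd property then forces $\eta = \psi_T$. The double commutativity should follow the template of Lemma \ref{lemma: lcm}: use \eqref{eqn: RiRj Proj} together with an SOT-limit of $P_{\vp H^2(\D^n)}$ along the upward-directed partial order on $\clj_T$ given by $\vp \preceq \psi \Leftrightarrow \vp H^2(\D^n) \subseteq \psi H^2(\D^n)$.
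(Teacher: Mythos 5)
Your argument is correct and follows essentially the same route as the paper's proof: place $\vp_1 \in \clj_T$ and $\vp_2 \in \clj_{T^*}$, establish $T = P_{\clq_{\psi_T}} T P_{\clq_{\psi_{T^*}}}$ from the range containments $\overline{\text{ran}\,T} \subseteq \clq_{\psi_T}$ and $\overline{\text{ran}\,T^*} \subseteq \clq_{\psi_{T^*}}$, then substitute the given factorization and absorb the inner projection factors using divisibility by the gcd (the paper justifies the containment by asserting $\psi_T \in \clj_T$ directly, which is the same structural fact as your join identification $\bigvee_{\vp \in \clj_T} \vp H^2(\D^n) = \psi_T H^2(\D^n)$). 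Your closing remark correctly locates the only real work in the $n>1$ case of Lemma \ref{lemma: gcd}, which the paper itself leaves as ``similar to the proof of Lemma \ref{lemma: lcm}.''
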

\begin{proof}
Suppose there are inner functions $\vp_1, \vp_2 \in H^\infty(\D^n)$ such that $T = P_{\clq_{\vp_1}} P_{\clq_{\psi_{\vp_2}}}$. Since $T$ satisfies the gcd property, we know that $\psi_T$ and $\psi_{T^*}$ are inner functions in $H^\infty(\D^n)$. Since $\psi_T \in \clj_T$ and is inner, it follows that
\[
\psi_T H^2(\D^n) \subseteq \ker T^*,
\]
and hence
\[
\overline{\text{ran} T} = (\ker T^*)^\perp \subseteq \clq_{\psi_T}.
\]
In particular, $\text{ran} T \subseteq \clq_{\psi_T}$. Now, we know in general that $\text{ran} (T P_{\clq_{T^*}}) \subseteq \text{ran} T$, which immediately implies that
\[
\text{ran} (T P_{\clq_{\psi_{T^*}}}) \subseteq \clq_{\psi_T},
\]
and hence we have the identity
\[
P_{\clq_{\psi_T}} T P_{\clq_{\psi_{T^*}}} = T P_{\clq_{\psi_{T^*}}}.
\]
Again, we know the general fact that $T = T P_{(\ker T)^\perp}$. The above identity implies
\[
T P_{\clq_{\psi_{T^*}}} = T P_{(\ker T)^\perp} P_{\clq_{\psi_{T^*}}}
\]
Next, as $\psi_{T^*} \in \clj_{T^*}$, we have $\psi_{T^*} H^2(\D^n) \subseteq \ker T$, equivalently, $(\ker T)^\perp \subseteq \clq_{\psi_{T^*}}$. This implies $P_{\clq_{\psi_{T^*}}} P_{(\ker T)^\perp} = P_{(\ker T)^\perp}$, and hence
\[
T P_{\clq_{\psi_{T^*}}} = T P_{(\ker T)^\perp} = T.
\]
This combined with $P_{\clq_{\psi_T}} T P_{\clq_{\psi_{T^*}}} = T P_{\clq_{\psi_{T^*}}}$ yields
\[
T = P_{\clq_{\psi_T}} T P_{\clq_{\psi_{T^*}}}.
\]
Now, as we know that $T$ is a product of projections $T = P_{\clq_{\vp_1}} P_{\clq_{\psi_{\vp_2}}}$, the above implies
\[
T = P_{\clq_{\psi_T}} P_{\clq_{\vp_1}} P_{\clq_{\psi_{\vp_2}}} P_{\clq_{\psi_{T^*}}}.
\]
Looking at the product of two projections $P_{\clq_{\psi_T}} P_{\clq_{\vp_1}}$, we observe that both $\psi_T$ and $\vp_1$ are in $\clj_T$, which implies by the definition of gcd that $\vp_1$ divides $\psi_T$; equivalently, $\clq_{\psi_T} \subseteq \clq_{\vp_1}$. This gives
\[
P_{\clq_{\psi_T}} P_{\clq_{\vp_1}} = P_{\clq_{\psi_T}},
\]
and also, similarly,
\[
P_{\clq_{\psi_{\vp_2}}} P_{\clq_{\psi_{T^*}}} = P_{\clq_{\psi_{T^*}}}.
\]
This completes the proof of the fact that $T = P_{\clq_{\psi_T}} P_{\clq_{\psi_{T^*}}}$.
\end{proof}

Similar to the case of inner projections, the above theorem is an analogue of Crimmins’s factorization result, as stated in Corollary \ref{cor: Crimmins 1}, but this time in the context of model projections.

At this time, we will make a general observation regarding kernels of product of two projections: When $T \in \clb(\clh)$ is expressed as a product of two projections, $T = P_1 P_2$, we obtain:
\begin{equation}\label{eqn: ker T}
\ker (P_1P_2) = [\text{ran}(I - P_1) \cap \text{ran} P_2] \oplus \text{ran}(I - P_2).
\end{equation}
Indeed, if $P_1 P_2 h = 0$ for some $h \in \clh$, then we write $h = h_r \oplus h_n \in \text{ran} P_2 \oplus \ker P_2$. Therefore
\[
0 = P_1 P_2 h = P_1 h_r,
\]
implies $(I - P_1) h_r = h_r \in \text{ran} P_2$, and consequently
\[
h = h_r \oplus h_n \in [\text{ran}(I - P_1) \cap \text{ran} P_2] \oplus \text{ran}(I - P_2),
\]
proving that $\ker T$ is contained in the right side subspace of \eqref{eqn: ker T}. The reverse set inclusion is straightforward.

This, when combined with Proposition \ref{prop: Sybest} in the context of inner projections, results in the following characterization of operators as products of two inner projections:

\begin{theorem}\label{thm: inner proj and ker}
Let $T \in \clb(H^2(\D^n))$ be a nonzero operator satisfying the lcm property. Then $T$ is a product of two inner projections if and only if
\[
T T^* = P_{\vp_T H^2(\D^n)} T^*,
\]
and
\[
\ker T = [\vp_{T^*} H^2(\D^n) \cap \clq_{\vp_{T}}] \oplus \clq_{\vp_{T^*}}.
\]
\end{theorem}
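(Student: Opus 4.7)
The plan is to handle both directions by combining Theorem \ref{thm: prod of inner}, Proposition \ref{prop: Sybest}, and the general kernel identity \eqref{eqn: ker T}. The forward direction is essentially a calculation, while the converse couples the two hypotheses: the operator identity supplies the canonical left projection factor, and the kernel identity forces the right factor to coincide with the inner projection $P_{\vp_{T^*} H^2(\D^n)}$.

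For the forward direction, Theorem \ref{thm: prod of inner} yields $T = P_{\vp_T H^2(\D^n)} P_{\vp_{T^*} H^2(\D^n)}$, and therefore $T^* = P_{\vp_{T^*} H^2(\D^n)} P_{\vp_T H^2(\D^n)}$. A direct product expansion gives
\[
T T^* = P_{\vp_T H^2(\D^n)} P_{\vp_{T^*} H^2(\D^n)} P_{\vp_T H^2(\D^n)} = P_{\vp_T H^2(\D^n)} T^*,
\]
which is the first identity. Applying \eqref{eqn: ker T} with $P_1 = P_{\vp_T H^2(\D^n)}$ and $P_2 = P_{\vp_{T^*} H^2(\D^n)}$, and using $\text{ran}(I - P_{\vp H^2(\D^n)}) = \clq_\vp$, yields the kernel identity.

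For the converse, apply Proposition \ref{prop: Sybest} with $T_1 = T$ and $T_2 = P_{\vp_T H^2(\D^n)}$. The hypothesis $T T^* = P_{\vp_T H^2(\D^n)} T^*$ is exactly $T_1 T_1^* = T_2 T_1^*$, so $T = P_{\vp_T H^2(\D^n)} P$ for some projection $P$, and the remark following Proposition \ref{prop: Sybest} identifies $P = P_{\overline{\text{ran} T^*}}$. This gives
\[
T = P_{\vp_T H^2(\D^n)} P_{\overline{\text{ran} T^*}},
\]
a product of two projections whose right factor is not yet known to be inner. The remaining task is to show that $P_{\overline{\text{ran} T^*}}$ may be replaced by $P_{\vp_{T^*} H^2(\D^n)}$ without altering $T$, i.e.\ that $T$ equals $S := P_{\vp_T H^2(\D^n)} P_{\vp_{T^*} H^2(\D^n)}$.

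I would verify $T = S$ on the orthogonal decomposition $H^2(\D^n) = \overline{\text{ran} T^*} \oplus \ker T$. For $f \in \overline{\text{ran} T^*}$, both $P_{\overline{\text{ran} T^*}}$ and $P_{\vp_{T^*} H^2(\D^n)}$ fix $f$ (the latter because $\overline{\text{ran} T^*} \subseteq \vp_{T^*} H^2(\D^n)$ by the very definition of $\cli_{T^*}$), so $Tf = P_{\vp_T H^2(\D^n)} f = Sf$. For $f \in \ker T$, certainly $Tf = 0$, and the kernel hypothesis lets us write $f = g + h$ with $g \in \vp_{T^*} H^2(\D^n) \cap \clq_{\vp_T}$ and $h \in \clq_{\vp_{T^*}}$; then $P_{\vp_{T^*} H^2(\D^n)} f = g$ because $h \perp \vp_{T^*} H^2(\D^n)$, and $P_{\vp_T H^2(\D^n)} g = 0$ because $g \in \clq_{\vp_T}$, so $Sf = 0$. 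The main obstacle is conceptual rather than technical: the first hypothesis alone yields only the opaque right factor $P_{\overline{\text{ran} T^*}}$, and one must recognize that the kernel hypothesis is precisely the extra information that constrains how $\vp_{T^*} H^2(\D^n)$ can exceed $\overline{\text{ran} T^*}$, so that the swap to the inner projection leaves $T$ unchanged.
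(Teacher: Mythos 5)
Your proposal is correct and follows essentially the same route as the paper: the forward direction is the identical computation from Theorem \ref{thm: prod of inner} and \eqref{eqn: ker T}, and the converse uses Proposition \ref{prop: Sybest} to obtain $T = P_{\vp_T H^2(\D^n)} P_{(\ker T)^\perp}$ and then upgrades the right factor to $P_{\vp_{T^*} H^2(\D^n)}$ via the inclusion $(\ker T)^\perp = \overline{\text{ran}\, T^*} \subseteq \vp_{T^*} H^2(\D^n)$ together with the kernel hypothesis. Your pointwise verification on $\overline{\text{ran}\, T^*} \oplus \ker T$ is just an unpacked form of the paper's operator-identity chain, so there is no substantive difference.
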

\begin{proof}
If $T$ is a product of two inner projections, then Theorem \ref{thm: prod of inner} implies
\[
T = P_{\vp_T H^2(\D^n)} P_{\vp_{T^*} H^2(\D^n)}.
\]
The representation of $\ker T$ then follows from \eqref{eqn: ker T}. For the other identity, we compute:
\[
T T^* = P_{\vp_T H^2(\D^n)} P_{\vp_{T^*} H^2(\D^n)} P_{\vp_T H^2(\D^n)} = P_{\vp_T H^2(\D^n)} T^*,
\]
We now turn to prove the sufficient part. By Proposition \ref{prop: Sybest} and $T T^* = P_{\vp_T H^2(\D^n)} T^*$, we know that
\[
T = P_{\vp_T H^2(\D^n)} P_{(\ker T)^\perp}.
\]
Define
\[
\tilde{T} = P_{\vp_T H^2(\D^n)} P_{\vp_{T^*} H^2(\D^n)}.
\]
We claim that $T = \tilde{T}$. The given assumption about $\ker T$ and the identity \eqref{eqn: ker T} yield
\[
\ker T = \ker \tilde{T}.
\]
In particular, $\clq_{\vp_{T^*}} \subseteq \ker T$, that is, $(\ker T)^\perp \subseteq \vp_{T^*} H^2(\D^n)$. This implies $P_{(\ker T)^\perp} = P_{\vp_{T^*} H^2(\D^n)} P_{(\ker T)^\perp}$, and hence
\[
T = P_{\vp_T H^2(\D^n)} P_{(\ker T)^\perp} = P_{\vp_T H^2(\D^n)} P_{\vp_{T^*} H^2(\D^n)} P_{(\ker T)^\perp} = P_{\vp_T H^2(\D^n)} P_{\vp_{T^*} H^2(\D^n)} = \tilde{T}.
\]
This completes the proof of the theorem.
\end{proof}

Finally, we again consider model projections. The proof in this case is similar to the previous one.

\begin{corollary}
Let $T \in \clb(H^2(\D^n))$ be a nonzero operator satisfying the gcd property. Then $T$ is a product of two model projections if and only if
and
\[
T T^* = P_{\clq_{\psi_T}} T^*,
\]
and
\[
\ker T = [\clq_{\psi_{T^*}} \cap \psi_{T} H^2(\D^n)] \oplus \psi_{T^*} H^2(\D^n).
\]
\end{corollary}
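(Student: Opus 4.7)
The plan is to follow the template of Theorem \ref{thm: inner proj and ker} essentially verbatim, with inner projections replaced by model projections, and with Theorem \ref{thm: model proj} playing the role that Theorem \ref{thm: prod of inner} played there. The key pair of algebraic identities we will invoke throughout is: for any inner $\vp \in H^\infty(\D^n)$,
\[
P_{\clq_\vp} = I - P_{\vp H^2(\D^n)}, \qquad \text{ran}(I - P_{\clq_\vp}) = \vp H^2(\D^n).
\]

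For the necessary direction, suppose $T = P_{\clq_{\vp_1}} P_{\clq_{\vp_2}}$ for some inner functions $\vp_1, \vp_2 \in H^\infty(\D^n)$. Since $\clq_{\vp_1} = (\vp_1 H^2(\D^n))^\perp \subseteq \ker T^*$ and $\clq_{\vp_2} \subseteq \ker T$, we obtain $\vp_1 \in \clj_T$ and $\vp_2 \in \clj_{T^*}$, so both sets are nonempty and Lemma \ref{lemma: gcd} produces $\psi_T$ and $\psi_{T^*}$. By Theorem \ref{thm: model proj}, $T = P_{\clq_{\psi_T}} P_{\clq_{\psi_{T^*}}}$. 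The identity $TT^* = P_{\clq_{\psi_T}} T^*$ now drops out by direct computation exactly as in Theorem \ref{thm: inner proj and ker}. For the kernel formula, apply the general identity \eqref{eqn: ker T} with $P_1 = P_{\clq_{\psi_T}}$ and $P_2 = P_{\clq_{\psi_{T^*}}}$; since $\text{ran}(I - P_{\clq_{\psi_T}}) = \psi_T H^2(\D^n)$ and $\text{ran}(I - P_{\clq_{\psi_{T^*}}}) = \psi_{T^*} H^2(\D^n)$, this yields the asserted decomposition.

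For the sufficient direction, assume the three conditions hold. Proposition \ref{prop: Sybest} applied to the hypothesis $TT^* = P_{\clq_{\psi_T}} T^*$ gives
\[
T = P_{\clq_{\psi_T}} P_{(\ker T)^\perp}.
\]
Define the candidate operator $\tilde{T} = P_{\clq_{\psi_T}} P_{\clq_{\psi_{T^*}}}$ and invoke \eqref{eqn: ker T} to compute $\ker \tilde{T} = [\psi_T H^2(\D^n) \cap \clq_{\psi_{T^*}}] \oplus \psi_{T^*} H^2(\D^n)$. By the hypothesized form of $\ker T$, we get $\ker T = \ker \tilde{T}$. In particular, $\psi_{T^*} H^2(\D^n) \subseteq \ker T$, hence $(\ker T)^\perp \subseteq \clq_{\psi_{T^*}}$, so $P_{\clq_{\psi_{T^*}}} P_{(\ker T)^\perp} = P_{(\ker T)^\perp}$. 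Therefore
\[
\tilde{T} = \tilde{T} P_{(\ker \tilde{T})^\perp} = P_{\clq_{\psi_T}} P_{\clq_{\psi_{T^*}}} P_{(\ker T)^\perp} = P_{\clq_{\psi_T}} P_{(\ker T)^\perp} = T,
\]
which displays $T$ as a product of two model projections.

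The only delicate point is the bookkeeping in \eqref{eqn: ker T}: one must keep track of which factor plays the role of $P_1$ and which of $P_2$ in the direct-sum decomposition of $\ker(P_1 P_2)$, since the two summands $\text{ran}(I - P_1) \cap \text{ran}(P_2)$ and $\text{ran}(I - P_2)$ are not symmetric. Once the orientation is fixed consistently with $P_1 = P_{\clq_{\psi_T}}$ and $P_2 = P_{\clq_{\psi_{T^*}}}$, the identification of $\ker T$ with $\ker \tilde{T}$ is immediate and the rest of the argument is purely formal; the substantive analytic content, namely the existence and characterization of $\psi_T$ and $\psi_{T^*}$, has already been absorbed into Lemma \ref{lemma: gcd} and Theorem \ref{thm: model proj}.
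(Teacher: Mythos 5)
Your proof is correct and is precisely the argument the paper intends: the paper omits the proof of this corollary, remarking only that it is ``similar to the previous one,'' i.e., to Theorem \ref{thm: inner proj and ker}, and your combination of Theorem \ref{thm: model proj}, Proposition \ref{prop: Sybest}, and the kernel identity \eqref{eqn: ker T} (with the roles of $P_1$ and $P_2$ oriented exactly as you note) is that template transcribed to model projections. One slip of the pen in the necessity part: to get $\vp_1 \in \clj_T$ and $\vp_2 \in \clj_{T^*}$ you need $\vp_1 H^2(\D^n) = \clq_{\vp_1}^\perp \subseteq \ker T^*$ and $\vp_2 H^2(\D^n) \subseteq \ker T$ (true, since these are the kernels of the left and right projection factors), not the inclusions $\clq_{\vp_1} \subseteq \ker T^*$ and $\clq_{\vp_2} \subseteq \ker T$ as written, which for a nonzero $T$ are false (they would force $T = 0$); the correct inclusions are exactly what your own preliminary identity $\text{ran}(I - P_{\clq_\vp}) = \vp H^2(\D^n)$ supplies.
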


In the following section, we will use the identity \eqref{eqn: ker T} to concrete examples of projections. We will see that this simple observation brings some quick answers to questions related to the product of two projections.

We now point out that operators which are products of inner projections cannot be expressed as products of two model projections.

\begin{proposition}
Let $T \in \clb(H^2(\D^n))$ be a product of two inner projections. Then $T$ is not a product of two model projections.
\end{proposition}
\begin{proof}
Let $\varphi_1, \varphi_2$ be inner functions in $H^{\infty}(\D^n)$ such that
\[
T = P_{\varphi_1 H^2(\D^n)}P_{\varphi_2H^2(\D^n)}.
\]
By Theorem \ref{thm: cnu ker T-I}, we also have
\[
H^2(\D^n)_u = \varphi_1 H^2(\D^n) \cap \varphi_2 H^2(\D^n),
\]
where $H^2(\D^n)_u$ denotes the unitary part of the contraction $T$. In particular, $H^2(\D^n)_u$ is a nontrivial invariant subspace of $H^2(\D^n)$. Now, for a contradiction, suppose $T$ is a product of two model projections. Thus there are inner functions $\theta_1, \theta_2 \in H^{\infty}(\D^n)$ such that
\[
T = P_{\clq_{\theta_1}}P_{\clq_{\theta_2}}.
\]
Again, by Theorem $\ref{thm: cnu ker T-I}$, we have
\[
H^2(\D^n)_u = \clq_{\theta_1} \cap \clq_{\theta_2}.
\]
This means that $H^2(\D^n)_u$ is a joint $M_z = (M_{z_1}, \dots, M_{z_n})$ reducing subspace of $H^2(\D^n)$.
This forces $H^2(\D^n)_u = \{0\}$, which is a contradiction.
\end{proof}

We now revisit Theorem \ref{comm inner proj}. A key step in that result relies on the fact that the product of two commuting inner projections is again an inner projection. This, however, is not generally true: the product of two commuting model projections need not be a model projection whenever $ n > 1$. Nevertheless, in the spirit of Theorem \ref{comm inner proj}, we obtain the following result concerning model spaces.

\begin{theorem}\label{prod of model model}
Let $T \in \clb(H^2(\D^n))$ be a product of two model projections. Then $T$ is a model projection if and only if
\[
\ker T = \psi_{T} H^2(\D^n).
\]
\end{theorem}
\begin{proof}
Suppose $T = P_{\clq_\theta}$ for some inner function $\theta \in H^\infty(\D^n)$. Then $\ker T^* = \theta H^2(\D^n)$. As
\[
\clj_{T} = \{\varphi \in H^{\infty}(\D^n) : \varphi \text{ is inner and } \varphi H^2(\D^n) \subseteq \text{ ker }T^*\}
\]
it readily follows (from the definition of gcd) $\clj_T$ has a gcd. By our notation for the gcd of $\clj_T$, it follows $\psi_T H^2(\D^n) = \theta H^2(\D^n)$. Therefore,
\[
\text{ker }T = \text{ ker }T^* = \theta H^2(\D^n) = \psi_T H^2(\D^n).
\]
For the converse direction, assume that $\ker T = \psi_T H^2(\D^n)$. Again, as
\[
\clj_{T^*} = \{\varphi \in H^{\infty}(\D^n) : \varphi \text{ is inner and } \varphi H^2(\D^n) \subseteq \text{ ker }T\},
\]
it clearly follows that $\clj_{T^*}$ has a gcd. As by our notation, $\psi_{T^*} = $ gcd $\clj_{T^*}$, it follows that
\[
\psi_T H^2(\D^n) = \psi_{T^*} H^2(\D^n),
\]
and hence
\[
\clq_{\psi_T} = \clq_{\psi_{T^*}}.
\]
Here, both $\clj_{T}$, $\clj_{T^*}$ has a gcd, thus $T$ satisfies the gcd property. Since $T$ is a product of two model projections, by Theorem \ref{thm: model proj}, we know that
\[
T = P_{\clq_{\psi_T}} P_{\clq_{\psi_{T^*}}}.
\]
Therefore, we have $T = P_{\clq_{\psi_T}}$. This completes the proof.
\end{proof}

In closing this section, we remark that model spaces, and more generally, quotient modules over the polydisc, present both significant importance and challenges. We refer the reader to \cite{Monojit, Liaw, Debnath, Guo} and the references therein for related results on this theme.

\section{Blaschke products}\label{sec: example}

This section aims to provide concrete examples that illustrate some of the results obtained thus far. Given $\alpha \in \D$, the function $b_\alpha \in Aut(\D)$ defined by
\[
b_\alpha(z) = \frac{z - \alpha}{1 - \bar{\alpha} z} \qquad (z \in \D),
\]
is known as a \textit{Blaschke factor}. A finite Blaschke product is an inner function $\vp \in H^\infty(\D)$ such that
\[
\vp = \prod_{j=1}^m b_{\alpha_j},
\]
for some finite subset $\{\alpha_j\}_{j=1}^m \subset \D$. We set the zero set of $\vp$ as follows:
\[
\clz(\vp) = \{\alpha_j\}_{j=1}^m,
\]
counting the multiplicity. Blaschke products are important tools in Hilbert function spaces. In our context, we recall that finite Blaschke products provide finite codimensional invariant subspaces of $H^2(\D)$. More specifically, if $\clq_\vp$ is a model space for some inner function $\vp \in H^\infty(\D)$, then
\[
\text{dim} \clq_\vp < \infty,
\]
if and only if $\vp$ is a finite Blaschke product. Moreover, in this case, we have
\[
\text{dim} \clq_\vp = |\clz(\vp)|.
\]
The lemma that follows is elementary and well known. The cardinality of a set $A$ is denoted by the notation $|A|$.

\begin{lemma}\label{lem: Blaschk submodule}
Let $\vp_1$ and $\vp_2$ be two finite Blaschke products. Then
\[
\clq_{\vp_1} \cap \vp_2 H^2(\D) = \{0\},
\]
if and only if
\[
|\clz(\vp_1)| \leq |\clz(\vp_2)|.
\]
\end{lemma}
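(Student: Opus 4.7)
The plan is to prove the two directions by two different techniques, both leveraging that $\dim \clq_{\vp_i} = |\clz(\vp_i)| =: m_i < \infty$ for finite Blaschke products. First I would recall the explicit description of the model space: every $f \in \clq_{\vp_1}$ is a rational function of the form
\[
f(z) = \frac{p(z)}{\prod_j (1 - \bar{\alpha_j} z)^{n_j}},
\]
where the $\alpha_j$ and multiplicities $n_j$ come from the zero set of $\vp_1$, and $p$ is a polynomial of degree at most $m_1 - 1$. This is a standard consequence of the fact that $\clq_{\vp_1}$ is spanned by reproducing kernels $k_{\alpha_j}$ (and their derivatives in the case of repeated zeros).

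For the forward direction I would argue by zero counting. Suppose $m_1 \leq m_2$ and $f \in \clq_{\vp_1} \cap \vp_2 H^2(\D)$ is nonzero. The denominator displayed above has no zeros in $\D$, so $f$ inherits its zeros in $\D$ from $p$, whence $f$ has at most $m_1 - 1$ zeros in $\D$ counted with multiplicity. On the other hand, since $f \in \vp_2 H^2(\D)$, the function $f$ must vanish at each zero of $\vp_2$ with at least the prescribed multiplicity, so $f$ has at least $m_2 \geq m_1$ zeros in $\D$ counted with multiplicity. These bounds are incompatible, forcing $f = 0$.

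For the reverse direction I would use a dimension count. Assume $m_1 > m_2$ and consider the linear map
\[
\Phi = P_{\clq_{\vp_2}}|_{\clq_{\vp_1}} : \clq_{\vp_1} \longrightarrow \clq_{\vp_2}.
\]
Since $(\clq_{\vp_2})^\perp = \vp_2 H^2(\D)$, the kernel of $\Phi$ equals exactly $\clq_{\vp_1} \cap \vp_2 H^2(\D)$. By the rank-nullity theorem,
\[
\dim \ker \Phi \geq \dim \clq_{\vp_1} - \dim \clq_{\vp_2} = m_1 - m_2 > 0,
\]
so the intersection is nontrivial. This gives the contrapositive of the remaining implication.

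I do not foresee any significant obstacle. The only point that requires slight care is the explicit parametrization of $\clq_{\vp_1}$ in the presence of repeated zeros, where one must use derivatives of reproducing kernels, but the zero-counting conclusion (degree of numerator $\leq m_1 - 1$) goes through unchanged because the denominator is still $\prod_j (1 - \bar{\alpha_j} z)^{n_j}$ with $\sum_j n_j = m_1$.
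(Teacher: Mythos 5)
Your proof is correct. The forward direction (zero counting against the explicit rational-function description of $\clq_{\vp_1}$, with the numerator of degree at most $m_1-1$ and a denominator that is zero-free in $\D$) is essentially identical to the paper's argument. The reverse direction is where you genuinely diverge: the paper attempts to exhibit an explicit nonzero element of $\clq_{\vp_1} \cap \vp_2 H^2(\D)$ when $m_1 > m_2$, taking as witness (up to sign) the Blaschke product $\vp_2$ itself, whereas you run a rank--nullity count on the compression $\Phi = P_{\clq_{\vp_2}}|_{\clq_{\vp_1}}$. Your route is the safer one: membership of the paper's witness in $\clq_{\vp_1}$ is not automatic --- for instance, with $\vp_1 = z^2$ and $\vp_2 = b_{1/2}$ the function $b_{1/2}$ does not lie in $\clq_{z^2} = \mathrm{span}\{1,z\}$ --- and an explicit witness would need its denominator built from the zeros of $\vp_1$ rather than those of $\vp_2$. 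The dimension count sidesteps all of this, using only $\dim \clq_{\vp_i} = m_i$ and the identification $\ker \Phi = \clq_{\vp_1} \cap (\clq_{\vp_2})^\perp = \clq_{\vp_1} \cap \vp_2 H^2(\D)$, both immediate. What the constructive approach would buy, if carried out with a corrected witness, is a concrete element of the intersection rather than a pure existence statement; your argument trades that for robustness and brevity, and it also handles the degenerate cases ($m_1$ or $m_2$ equal to $0$) without further comment.
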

\begin{proof}
In the present setting, condition (4.1), as pointed out in \cite[Lemma 5.1]{Benhida}, is equivalent to the condition $\clq_{\vp_1} \cap \vp_2 H^2(\D) = \{0\}$. The conclusion then follows directly from \cite[Lemma 5.1]{Benhida}.
\end{proof}

In the following, we relate canonical factorizations of the products of inner projections with corresponding range spaces.

\begin{proposition}
Let $\vp_1, \vp_2\in H^\infty(\D)$ be finite Blaschke products. Assume that
\[
|\clz(\vp_1)| \neq |\clz(\vp_2)|.
\]
Then there does not exist any  $T \in \clb(H^2(\D))$ that is the product of two inner projections with the property that
\[
\overline{\text{ran} T} = \vp_1 H^2(\D) \text{ and } \overline{\text{ran} T^*} = \vp_2 H^2(\D).
\]
\end{proposition}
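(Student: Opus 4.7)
The plan is to argue by contradiction, extracting the two cardinality inequalities $|\clz(\vp_1)| \leq |\clz(\vp_2)|$ and $|\clz(\vp_2)| \leq |\clz(\vp_1)|$ and then invoking Lemma \ref{lem: Blaschk submodule}. Suppose such an operator $T \in \clb(H^2(\D))$ exists. Since $T$ is a product of two projections, the canonical factorization in Corollary \ref{cor: Crimmins 1}, combined with the prescribed range and co-range, yields
\[
T = P_{\overline{\text{ran} T}} P_{\overline{\text{ran} T^*}} = P_{\vp_1 H^2(\D)} P_{\vp_2 H^2(\D)}.
\]

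Next, I apply the general kernel decomposition \eqref{eqn: ker T} to this factorization with $P_1 = P_{\vp_1 H^2(\D)}$ and $P_2 = P_{\vp_2 H^2(\D)}$, which gives
\[
\ker T = [\clq_{\vp_1} \cap \vp_2 H^2(\D)] \oplus \clq_{\vp_2}.
\]
Taking orthogonal complements inside the decomposition $H^2(\D) = \vp_2 H^2(\D) \oplus \clq_{\vp_2}$ yields
\[
\overline{\text{ran} T^*} = (\ker T)^\perp = \vp_2 H^2(\D) \ominus [\clq_{\vp_1} \cap \vp_2 H^2(\D)].
\]
Since $\overline{\text{ran} T^*} = \vp_2 H^2(\D)$ by hypothesis, the subspace $\clq_{\vp_1} \cap \vp_2 H^2(\D)$ must be trivial, and Lemma \ref{lem: Blaschk submodule} then forces $|\clz(\vp_1)| \leq |\clz(\vp_2)|$. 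An identical argument applied to $T^* = P_{\vp_2 H^2(\D)} P_{\vp_1 H^2(\D)}$, together with the assumption $\overline{\text{ran} T} = \vp_1 H^2(\D)$, produces $\clq_{\vp_2} \cap \vp_1 H^2(\D) = \{0\}$, whence $|\clz(\vp_2)| \leq |\clz(\vp_1)|$. Combining these two inequalities gives $|\clz(\vp_1)| = |\clz(\vp_2)|$, contradicting the hypothesis.

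The argument is essentially routine once the two ingredients \eqref{eqn: ker T} and Lemma \ref{lem: Blaschk submodule} are in hand, and I do not foresee a serious obstacle. The only minor point worth verifying is that the orthogonal complement of $\ker T$ can be computed summand-wise, which is legitimate here because $\clq_{\vp_1} \cap \vp_2 H^2(\D) \subseteq \vp_2 H^2(\D)$ is automatically orthogonal to $\clq_{\vp_2}$, so the displayed sum for $\ker T$ is a genuine orthogonal direct sum.
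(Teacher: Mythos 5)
Your proof is correct and follows essentially the same route as the paper's: both arguments reduce the statement to showing $\clq_{\vp_1} \cap \vp_2 H^2(\D) = \{0\}$ and $\clq_{\vp_2} \cap \vp_1 H^2(\D) = \{0\}$ via the kernel identity \eqref{eqn: ker T}, and then invoke Lemma \ref{lem: Blaschk submodule} twice to obtain the contradictory pair of inequalities. The only (cosmetic) difference is that you obtain the factorization $T = P_{\vp_1 H^2(\D)} P_{\vp_2 H^2(\D)}$ directly from Corollary \ref{cor: Crimmins 1}, whereas the paper first identifies $\vp_T$ and $\vp_{T^*}$ with the given Blaschke products through the lcm construction.
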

\begin{proof}
Let $T \in \clb(H^2(\D))$ be a product of two inner projections. Moreover, assume that $\overline{\text{ran}} T = \vp_1 H^2(\D)$ and $\overline{\text{ran} T^*} = \vp_2 H^2(\D)$. Since $\overline{\text{ran} T} = \vp_1 H^2(\D)$, by the definition of lcm (also see the construction of $\cli_T$), it follows that
\[
\vp_2 = \alpha \vp_T,
\]
for some $\alpha \in \T$. Similarly, we also have
\[
\vp_1 = \beta \vp_{T^*},
\]
for some $\beta \in \T$. In particular, we have
\[
\text{ran} T = \vp_2H^2(\D) = \vp_T H^2(\D),
\]
and
\[
\text{ran} T^* = \vp_1 H^2(\D) = \vp_{T^*} H^2(\D).
\]
The later identity implies $\ker T = \clq_{\vp_{T^*}}$. By \eqref{eqn: ker T}, we also know that
\[
\ker T = [\clq_{\vp_T} \cap \vp_{T^*} H^2(\D)] \oplus \clq_{\vp_{T^*}},
\]
and consequently
\[
\clq_{\vp_T} \cap \vp_{T^*} H^2(\D) = \{0\}.
\]
In other words, we have $\clq_{\vp_2} \cap \vp_1 H^2(\D) = \{0\}$, and hence, Lemma \ref{lem: Blaschk submodule} implies that $|\clz(\vp_2)| \leq |\clz(\vp_1)|$. Similarly, as $T^*$ is also a product of two inner projections, working as above, we find that $|\clz(\vp_1)| \leq |\clz(\vp_2)|$. This yields $|\clz(\vp_2)| = |\clz(\vp_1)|$ -- a contradiction, which completes the proof of the proposition.
\end{proof}

For operators that are products of two inner projections, the corresponding kernel spaces have a specific relationship with the set of all inner functions.

\begin{lemma}\label{lemma: ker T}
Let $T \in \clb(H^2(\D))$ be a product of two inner projections. Then there does not exist any nonconstant inner function $\vp \in H^\infty(\D)$ such that
\[
\ker T \subseteq \vp H^2(\D).
\]
\end{lemma}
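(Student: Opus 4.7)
The plan is to combine the analytic characterization of $T$ from Theorem~\ref{thm: prod of inner} with the explicit kernel description in~\eqref{eqn: ker T}, and then reduce the problem to a statement about model spaces. First I would invoke Theorem~\ref{thm: prod of inner} to write $T = P_{\vp_T H^2(\D)} P_{\vp_{T^*} H^2(\D)}$, and then apply~\eqref{eqn: ker T} to decompose
\[
\ker T = \bigl[ \vp_{T^*} H^2(\D) \cap \clq_{\vp_T} \bigr] \oplus \clq_{\vp_{T^*}}.
\]
In particular $\clq_{\vp_{T^*}} \subseteq \ker T$; and if $\vp_{T^*}$ is a unimodular constant, the first summand degenerates and $\ker T = \clq_{\vp_T}$.

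Suppose toward a contradiction that $\ker T \subseteq \vp H^2(\D)$ for some nonconstant inner function $\vp \in H^\infty(\D)$. The heart of the argument reduces to the claim that, for any nonconstant inner $\eta$, the model space $\clq_\eta$ is not contained in $\vp H^2(\D)$. I would split into two cases. First, if $\vp$ has a zero $\beta \in \D$, every function in $\vp H^2(\D)$ vanishes at $\beta$; however, the reproducing kernel
\[
k_\beta^\eta(z) = \frac{1 - \overline{\eta(\beta)}\eta(z)}{1 - \bar\beta z}
\]
belongs to $\clq_\eta$ and satisfies $k_\beta^\eta(\beta) = (1 - |\eta(\beta)|^2)/(1 - |\beta|^2) > 0$, contradicting membership in $\vp H^2(\D)$. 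Second, if $\vp$ has no zeros in $\D$, so that $\vp$ is a nontrivial singular inner function, I would invoke the inner-outer factorization: membership $f \in \vp H^2(\D)$ forces the inner factor of $f$ to be divisible by $\vp$; but $\clq_\eta$ contains functions whose inner factor is not divisible by $\vp$ --- most cleanly, when $\eta$ is a finite Blaschke product (the concrete setting of this section), every reproducing kernel $k_\alpha^\eta$ is a rational function with no zeros in $\D$, hence outer, and thus has trivial inner factor.

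Applying the claim with $\eta = \vp_{T^*}$ (or with $\eta = \vp_T$ in case $\vp_{T^*}$ is a unimodular constant, so that $\ker T = \clq_{\vp_T}$) produces the desired contradiction and proves the lemma, with the degenerate case $T = I$ (where $\ker T = \{0\}$ is vacuously contained in every subspace) implicitly excluded. I expect the main obstacle to be handling the singular-inner case of the model-space claim in full generality; in the Blaschke setting of this section, where the relevant reproducing kernels are manifestly outer, the argument goes through directly.
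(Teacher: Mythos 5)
Your setup coincides with the paper's: both arguments start from Theorem~\ref{thm: prod of inner} and the kernel formula \eqref{eqn: ker T}, and both reduce the lemma to the claim that a nonzero model space $\clq_\eta$ cannot be contained in $\vp H^2(\D)$ for a nonconstant inner $\vp$. Where you diverge is in the proof of that claim. The paper disposes of it in one line via shift-invariance: $\clq_\eta \subseteq \vp H^2(\D)$ forces $z^m\clq_\eta \subseteq \vp H^2(\D)$ for all $m$, and since $\bigvee_{m\geq 0} z^m\clq_\eta = H^2(\D)$ for nonconstant $\eta$, this yields $H^2(\D)\subseteq \vp H^2(\D)$, i.e.\ $\vp$ is constant. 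You instead split on whether $\vp$ has a zero and argue with reproducing kernels. Your first case is complete and correct. Your second case (singular $\vp$) is where you rightly sense a gap: the lemma concerns arbitrary products of inner projections, so $\eta=\vp_{T^*}$ (or $\vp_T$) need not be a finite Blaschke product, and you only justify outerness of $k_\alpha^\eta$ in the Blaschke case. The gap closes easily: $\mathrm{Re}\bigl(1-\overline{\eta(\alpha)}\eta(z)\bigr)\geq 1-|\eta(\alpha)|>0$ on $\D$, an $H^2$ function with positive real part is outer, and $(1-\bar{\alpha}z)^{-1}$ is invertible in $H^\infty$, so every reproducing kernel of every model space is outer. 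With that observation your argument works in full generality, though the paper's shift-invariance argument is shorter and avoids inner--outer factorization entirely. Two points in your favour: you correctly read off $\clq_{\vp_{T^*}}$ (not $\clq_{\vp_T}$) as the direct summand of $\ker T$ supplied by \eqref{eqn: ker T}, where the paper's text has the indices swapped; and you explicitly flag the degenerate case $T=I$ (both $\vp_T$ and $\vp_{T^*}$ unimodular constants), for which $\ker T=\{0\}$ and the statement fails as literally written, something the paper's proof passes over silently.
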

\begin{proof}
Suppose $T$ is a product of two inner projections. We have, in particular, that $T = P_{\vp_T H^2(\D^n)} P_{\vp_{T^*} H^2(\D^n)}$. We again use the identity \eqref{eqn: ker T} and observe that
\[
\ker T = [\clq_{\vp_T} \cap \vp_{T^*} H^2(\D)] \oplus \clq_{\vp_{T^*}},
\]
In particular, $\clq_{\vp_T} \subseteq \ker T$, and hence, by assumption, $\clq_{\vp_T} \subseteq \vp H^2(\D)$. This can happen only if $\vp$ is a constant function. Indeed, $\clq_{\vp_T} \subseteq \vp H^2(\D)$ implies that $z^m \clq_{\vp_T} \subseteq \vp H^2(\D)$ for all $m \in \Z_+$, and then
\[
H^2(\D) = \vee_{m \in \Z_+} z^m \clq_{\vp_T} \subseteq \vp H^2(\D),
\]
proves the fact that $\vp$ is a constant function.
\end{proof}

As a consequence, we have the following result that says when an operator on $H^2(\D)$ can't be represented as a product of two inner projections. 

\begin{corollary}
Let $T \in \clb(H^2(\D))$ be a nonzero operator. If $\ker T$ is $T_z$-invariant, then $T$ is not a product of two inner projections.
\end{corollary}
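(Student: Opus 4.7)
The plan is to reduce the statement to Lemma \ref{lemma: ker T} by invoking Beurling's theorem on $H^2(\D)$. First, note that in this paper the term ``invariant subspace'' is defined to be nonzero, so the hypothesis ``$\ker T$ is $T_z$-invariant'' already supplies that $\ker T \neq \{0\}$ together with $T_z \ker T \subseteq \ker T$. Since $T \neq 0$, we also have $\ker T \neq H^2(\D)$, so $\ker T$ is a proper, nonzero, $T_z$-invariant closed subspace of $H^2(\D)$.

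Next, I would apply Beurling's theorem (the $n=1$ special case of the doubly commuting characterization recalled at the start of Section \ref{sec:inner proj}) to obtain an inner function $\vp \in H^\infty(\D)$ with
\[
\ker T = \vp H^2(\D).
\]
Because $\ker T$ is a proper subspace (as $T \neq 0$), $\vp$ must be nonconstant. In particular, $\ker T \subseteq \vp H^2(\D)$ holds with $\vp$ nonconstant.

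Now suppose for contradiction that $T$ is a product of two inner projections. Then Lemma \ref{lemma: ker T} asserts that no nonconstant inner function can satisfy $\ker T \subseteq \vp H^2(\D)$, which directly contradicts the conclusion of the previous paragraph. Hence $T$ cannot be expressed as a product of two inner projections.

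There is no real obstacle here: the corollary is essentially a one-line combination of Beurling's theorem with the preceding lemma. The only items to double-check are the conventions, namely that the paper's definition of invariant subspace excludes $\{0\}$ (so that Beurling produces a genuine inner generator) and that nonvanishing of $T$ forces the generator $\vp$ to be nonconstant rather than unimodular.
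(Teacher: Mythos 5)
Your proposal is correct and follows essentially the same route as the paper: apply Beurling's theorem to write $\ker T = \vp H^2(\D)$ for an inner $\vp$, then invoke Lemma \ref{lemma: ker T} to force $\vp$ to be constant, contradicting $T \neq 0$. Your explicit attention to the conventions (that the invariant subspace is nonzero and that a constant $\vp$ would give $\ker T = H^2(\D)$) is a slightly more careful rendering of the same argument.
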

\begin{proof}
Suppose $T$ is a product of two inner projections. If there exists an inner function $\vp \in H^\infty(\D)$ such that $\ker T = \vp H^2(\D)$, then Lemma \ref{lemma: ker T} will force that $\vp$ to be a constant function, implying that $T = 0$.
\end{proof}

For examples of bounded linear operators on $H^2(\D)$ that meet the criteria stated in the above result, we note that for every inner function $\theta \in H^\infty(\D)$, there exists a Hankel operator $H_\vp$, $\vp \in L^\infty(\T)$, such that \cite[page 15]{VP}
\[
\ker H_\vp = \theta H^2(\D).
\]
Clearly, there is now an abundance of examples of Hankel operators, as well as the operators referenced in the above corollary.

\vspace{0.2in}

\noindent\textbf{Acknowledgement:} We thank the referee for a careful reading of the manuscript and for pointing out some issues in the previous version. We are also grateful to Zeyou Zhu from Fudan University for identifying a serious error concerning Beurling-type invariant subspaces on the polydisc in the original manuscript, which led us to substantially revise the paper. The research of the second named author is supported in part by TARE (TAR/2022/000063) and MATRICS (ANRF/ARGM/2025/000130/MTR) by ANRF, Department of Science \& Technology (DST), Government of India.

\bibliographystyle{amsplain}

\end{document}